\documentclass{amsart}

\usepackage{amsthm,amsmath,amsthm,amssymb,mathrsfs,graphicx,mathtools,relsize,hyperref,cleveref,enumerate,nameref,dsfont}
\usepackage[utf8]{inputenc}
\usepackage[all]{xy}
\usepackage[nobysame]{amsrefs}

\theoremstyle{theorem}
 \newtheorem{thm}{Theorem}[section]
 \newtheorem{prop}[thm]{Proposition}
 \newtheorem{lem}[thm]{Lemma}
 \newtheorem{cor}[thm]{Corollary}

 \newtheorem*{thmA}{Theorem A}
 \newtheorem*{thmB}{Theorem B}
\theoremstyle{definition}
 
 \newtheorem{dfn}[thm]{Definition}
 
\theoremstyle{remark}
 \newtheorem{rem}[thm]{Remark}
 \numberwithin{equation}{section}
 
\renewcommand{\le}{\leqslant}\renewcommand{\leq}{\leqslant}
\renewcommand{\ge}{\geqslant}
\renewcommand{\setminus}{\smallsetminus}
\def\Aut{\text{\rm Aut}}

\def\id{\text{\rm id}}

\def\COS{\text{\rm COS}}

\def\rank{\text{\rm rank}}

\def\ker{\text{\rm ker}}
\def\Re{\text{\rm Re}}

\def\H{\mathcal{H}}

\def\A{\mathcal{A}}

\def\B{\mathcal{B}}

\def\ind{\text{\rm ind}}
\def\cind{\text{\rm c-ind}}

\newcommand{\norm}[1]{\left\lVert#1\right\rVert}

\newcommand{\hlight}[1]{\textit{\textbf{#1}}}

\newcommand{\dd}{{\rm \, d}}
\newcommand{\spn}{{\rm span \,}}

\title[Non-Wiener groups with a Gelfand pair]{Non-Wiener groups with a Gelfand pair}
\date{\today}

\subjclass[2020]{Primary: 22D15; Secondary: 22D12,  22D20, 43A20, 43A70, 43A90, 46H10.}
\keywords{Wiener group, totally disconnected group, Gelfand pair, spherical function, boundary representation, principal series representation, Banach $*$-algebra}
\thanks{M.C.\ acknowledges support from the FWO and F.R.S.-FNRS under the Excellence of Science program (project ID 40007542).}

\author[Max Carter]{Max Carter} 
\address{Institut de recherche en mathématique et physique \\
Université Catholique de Louvain \\ 
Chemin du Cyclotron 2 \\
boîte L7.01.02 \\
1348 Louvain-la-Neuve \\
Belgique.}
\email{max.carter@uclouvain.be}

\author[Jared T.\ White]{Jared T.\ White} 
\address{School of Mathematics and Statistics \\
 The Open University \\
 Walton Hall, Milton Keynes MK7 6AA \\
 United Kingdom.}
\email{jared.white@open.ac.uk}

\begin{document}

\begin{abstract}
Let $G$ be a non-amenable locally compact group and $K$ a compact subgroup of $G$ such that $(G,K)$ is a Gelfand pair. We show that if $G$ admits a suitable boundary representation which is topologically irreducible and not unitarizable, then $G$ is not a 
Wiener group in the sense that its Fourier transform does not satisfy the analogue of Wiener's Tauberian theorem. As an application, we show that if $G$ is a closed non-compact boundary transitive group of automorphisms of a connected locally finite graph with infinitely many ends, or a non-abelian split reductive algebraic group over a non-archimedean local field, then $G$ is not Wiener. 

\end{abstract}

\maketitle


\section{Introduction}

In 1932, Norbert Wiener's article entitled \textit{Tauberian Theorems} was published in the Annals of Mathematics \cite{Wie32}. This article contains a number of different well known theorems in Fourier analysis, many of which receive the label of ``Wiener's Tauberian theorem'' in the literature. One of these results can be formulated as follows: a function $f \in L^1(\mathbb{R}^d)$ $(d \ge 1)$ generates a dense ideal of $L^1(\mathbb{R}^d)$ if and only if the Fourier transform of $f$ vanishes nowhere. In the terminology of the present article, this means that the group $\mathbb{R}^d$ is a Wiener group.

We shall now discuss how one can formulate a version of this property for general locally compact groups. Let $G$ be a (possibly non-abelian) locally compact group and let $\widehat{G}$ denote the set of unitary equivalence classes of irreducible unitary representations on $G$. Given $f \in L^1(G)$ and a unitary representation $(\pi,\H_\pi)$ of $G$, one may define the Fourier transform of $f$ evaluated at $\pi$ as the operator valued integral 
\begin{displaymath} \hat{f}(\pi) := \int_G f(x) \pi(x) \dd x. \end{displaymath}  
The integral here is defined in the sense of Bochner and $\hat{f}(\pi)$ is an element of $\B(\H_\pi)$, the space of bounded operators on the Hilbert space $\H_\pi$. Although the Fourier transform of $f$ is defined on all unitary representations of $G$, we will from here on in consider $\hat{f}$ as a function on the unitary dual $\widehat{G}$ of $G$, in analogue with the case of abelian groups. Furthermore, when $G$ is an abelian group, the irreducible unitary representations of $G$ can be identified with the unitary characters of $G$, so the above integral becomes an integral of complex valued functions, and we have the usual Fourier transform on an abelian group.

We remind the reader that there is a bijective correspondence between the (equivalence classes of) irreducible unitary representations of $G$ and the topologically irreducible $*$-representations of the Banach $*$-algebra $L^1(G)$ \cite[$\S$3.2]{Fol16}. This correspondence is given by the map
\begin{displaymath}  \widehat{G} \rightarrow \widehat{L^1(G)}, \pi \mapsto \widetilde\pi  \end{displaymath}
where $\widetilde\pi(f) := \hat{f}(\pi)$ for each $f \in L^1(G)$. 

From the above facts, one can then deduce that the statement that the Fourier transform of $f \in L^1(\mathbb{R}^d)$ vanishes nowhere from the first paragraph is equivalent to requiring that $f$, and hence the ideal generated by $f$, is not contained in the kernel of any 1-dimensional $*$-representation of $L^1(G)$. Motivated by these facts, one defines the notion of a Wiener group as follows.

\begin{dfn}
A locally compact group $G$ is called \hlight{Wiener} if every proper closed two-sided ideal $I \subset L^1(G)$ is contained in the kernel of a topologically irreducible $*$-representation of $L^1(G)$. 
\end{dfn}

We remark that the study of Wiener's Tauberian theorem and Wiener groups is intimately related to the topic of spectral synthesis \cite{Ben75}. Indeed, given a closed two-sided ideal $I \subseteq L^1(G)$, one defines the following subset of $\widehat{G}$, called the \hlight{hull} of $I$:
\begin{displaymath} h(I) := \{ \pi \in \widehat{G} : \hat{f}(\pi) = 0 \: \forall f \in I \}.\end{displaymath}
Similarly, given $S \subseteq \widehat{G}$ closed, one defines a closed two-sided ideal in $L^1(G)$, called the \hlight{kernel} of $S$:
\begin{displaymath} k(S) := \{ f \in L^1(G) : \hat{f}(\pi) = 0 \: \forall \pi \in S \}.  \end{displaymath}
It can be shown that $k(S)$ is the largest closed two-sided ideal $I \subseteq L^1(G)$ satisfying $h(I) = S$. The set $S \subseteq G$ is called a \hlight{set of synthesis} if $k(S)$ is the only closed ideal in $L^1(G)$ with hull equal to $S$.  The topic of spectral synthesis is an extremely interesting and heavily studied topic, particularly in the cases of abelian groups (see \cite[Chapter 10]{HR70}, \cite[Chapter 7]{Rud62} and \cite[Chapter 6]{KL18}) and connected nilpotent Lie groups \cite{Lud83a,LMB10,LMBP13,BL16}. The property of a group $G$ being Wiener is equivalent to the empty set $\emptyset \subset \widehat{G}$ being a set of synthesis.

The question of which groups are Wiener has been investigated extensively, particularly during the mid-to-late 20th century, and the focus during this period was primarily on understanding this property for connected locally compact groups and discrete groups \cite{Lep84}. There still remains, however, a strong lack of understanding of which non-discrete totally disconnected locally compact groups (abbreviated \hlight{tdlc groups} from now on) are Wiener. A main point of this paper is to make further progress on the Wiener property for this class of groups. We note that this article fits into a broader project of the first author concerning progressing the harmonic analysis and representation theory of non-discrete tdlc groups \cite{Car24,Car25-1,Car26,CC25}.

As already mentioned, Wiener's result implies that the group $\mathbb{R}^d$ is Wiener for any $d \ge 1$. It is also a well known and classical result in harmonic analysis that every locally compact abelian group is Wiener. We are not sure who this result is originally due to, but one may like to consult \cite[$\S$7.2]{Rud62} and the references therein for more information. Other classes of groups that are known to be Wiener include nilpotent groups \cite{Lud79} and compactly generated groups with polynomial growth \cite{Los01}. In the context of solvable groups, there is exactly one connected exponential (hence solvable) Lie group with real dimension $\le 4$ that is not Wiener \cite{LP79}.

On the other hand, as far as the authors are aware, there are no known examples of non-amenable Wiener groups. It is also well known that every non-compact connected semisimple Lie group is not Wiener \cite[Appendix]{Lep76}. Other than some select examples of solvable Lie groups, these are the only known examples of non-Wiener groups. The point of this article is to develop a general method for showing that a non-amenable group with a Gelfand pair is not Wiener. We use our method to prove that many non-amenable tdlc groups are not Wiener, significantly expanding the class of non-Wiener groups.

The setup in this article is as follows. Let $G$ be a locally compact group and suppose that $G$ contains a compact subgroup $K$ such that the convolution algebra of integrable $K$-bi-invariant functions on $G$, denoted by $L^1(K \backslash G /K)$, is commutative. When such a subgroup $K$ exists, the pair $(G,K)$ is called a \hlight{Gelfand pair}. It is shown in \cite{Mon20} that there exists a maximal cocompact amenable subgroup $P$ of $G$ such that $G=KP$ and such that we have homeomorphisms
\begin{displaymath} \partial G \cong G/P \cong K/K\cap P \end{displaymath}
where $\partial G$ denotes the Furstenberg boundary of $G$. 

Now suppose that $G$ is non-amenable. Then, $G/P$ is non-trivial and compact. The measure on $G/P$ obtained by pushing forward the Haar measure of $K$ via the homeomorphism $G/P \cong K/K\cap P$ is $K$-invariant and its measure class is preserved by the $G$-action. Denote this measure by $\mu$. For any complex parameter $z \in \mathbb{C}$, define a representation $\pi_{z}: G \rightarrow \B(L^2(G/P))$ given by 
\begin{displaymath} \pi_{z}(g)f(xP) := \bigg(\frac{\dd g\mu}{\dd \mu}(xP)\bigg)^zf(g^{-1}xP)  \end{displaymath}
where $g \in G$ and $f \in L^2(G/P)$. This representation is always strongly continuous.

The main result of this paper is the following theorem. We note that $\mathds{1}_{G/P}$ denotes the function which is identically 1 on $G/P$.
\begin{thmA}
Let $G$ be a non-amenable locally compact group and $K$ a compact subgroup of $G$ such that $(G,K)$ is a Gelfand pair. Suppose that there exists $z \in \mathbb{C}$ with $0 < \Re(z) < 1/2$ such that the representation $\pi_{z}$ is topologically irreducible and the matrix coefficient
\begin{displaymath} \varphi_z(g):= \langle \pi_{z}(g) \mathds{1}_{G/P}, \mathds{1}_{G/P} \rangle_{L^2(G/P)} \end{displaymath}
is not positive-definite. Then $G$ is not Wiener. 
\end{thmA}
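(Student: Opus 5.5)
The strategy is to exhibit a proper closed two-sided ideal of $L^1(G)$ that lies in no kernel of a topologically irreducible $*$-representation. We take it to be the closed two-sided ideal $I$ generated by the kernel of a non-unitarizable irreducible Banach representation, namely $\pi_z$. The key point is that for $0<\Re(z)<1/2$ the operators $\pi_z(g)$ are uniformly bounded on $L^2(G/P)$ only after an appropriate renormalisation. More precisely, $\pi_z$ extends to a bounded representation of $L^1(G)$ on some Banach space, say $L^p(G/P)$ with $p$ chosen so that $\Re(z)=1/p'$ or similar, where the Radon--Nikodym cocycle makes $\pi_z$ isometric. So we work with the corresponding isometric representation $\rho_z$ on $L^{p}(G/P)$, which has the same $K$-fixed vector $\mathds{1}_{G/P}$ and the same spherical function $\varphi_z$. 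Set $I:=\ker\widetilde{\rho_z}\subseteq L^1(G)$, a closed two-sided ideal.

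\textbf{Step 1: $I$ is proper.} Since $\rho_z$ is non-zero, $I\neq L^1(G)$. Concretely, some $K$-bi-invariant $f$ has $\langle \widetilde{\rho_z}(f)\mathds{1},\mathds{1}\rangle=\int f\varphi_z\neq 0$, because $\varphi_z\neq 0$ in $L^\infty$. Hence $f\notin I$.

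\textbf{Step 2: $I\not\subseteq\ker\widetilde\sigma$ for every $\sigma\in\widehat G$.} Suppose $I\subseteq\ker\widetilde\sigma$. We pass to the commutative Banach algebra $L^1(K\backslash G/K)=p_KL^1(G)p_K$, where $p_K$ is normalised Haar measure on $K$. First, by topological irreducibility of $\pi_z$ (which transfers to $\rho_z$ since the two agree on a dense common subspace), the $K$-fixed space of $\rho_z$ is spanned by $\mathds{1}_{G/P}$, as $K$ acts transitively on $G/P$. Hence $f\mapsto\widetilde{\rho_z}(f)$ restricted to $L^1(K\backslash G/K)$ is the character $\chi_z(f)=\int f\varphi_z$, and $I\cap L^1(K\backslash G/K)=\ker\chi_z$, a maximal modular ideal. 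Now suppose $\sigma$ has a nonzero $K$-fixed vector. Then $\sigma$ is spherical, and $\ker\widetilde\sigma\cap L^1(K\backslash G/K)=\ker\chi_\psi$ for its positive-definite spherical function $\psi$. From $\ker\chi_z\subseteq\ker\chi_\psi$ and maximality we get $\chi_z=\chi_\psi$, so $\varphi_z=\psi$ is positive-definite, a contradiction. If instead $\sigma$ has no $K$-fixed vector, then $\widetilde\sigma(p_K)=0$. We then need $I\not\subseteq\ker\widetilde\sigma$ by other means. Here we use irreducibility again: the closed two-sided ideal $J$ generated by $L^1(G)p_KL^1(G)$ satisfies $J+I=L^1(G)$. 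Indeed, the quotient $L^1(G)/I$ embeds into $\B(L^p)$ with image topologically irreducible and containing the rank-one projection $\widetilde{\rho_z}(p_K)$, so it contains densely all finite-rank operators. From this we can hope to show that no proper closed ideal of $L^1(G)/I$ avoids $p_K$. The cleaner route is to show that any $\sigma$ with $I\subseteq\ker\widetilde\sigma$ factors through the closure of $\widetilde{\rho_z}(L^1(G))$. This closure contains a dense ideal generated by the rank-one operator $\widetilde{\rho_z}(p_K)$, and since $\widetilde\sigma$ is continuous and nonzero it cannot kill that ideal. Consequently $\widetilde\sigma(p_K)\neq0$, which reduces to the spherical case.

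\textbf{Main obstacle.} I expect the hardest part to be the non-spherical case: justifying that $\widetilde\sigma$ factors continuously through $\overline{\widetilde{\rho_z}(L^1(G))}$ and that this algebra has no nonzero continuous quotient annihilating the ideal generated by the rank-one projection. The quotient norm on $L^1(G)/I$ dominates $\|\widetilde{\rho_z}(\cdot)\|$, but it need not be equivalent to it. I would first try to avoid this issue by using the fact that $\widetilde\sigma$ is a $*$-representation, hence contractive for any Banach $*$-algebra norm. I would then use the commutative Gelfand-pair structure to argue via the $*$-closed ideal $I\cap I^*$ (note $I^*=\ker\widetilde{\rho_{1-\bar z}}$, the contragredient), checking that $\varphi_z$ is non-positive-definite in the required hermitian sense.
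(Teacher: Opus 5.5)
\textbf{The gap.} The case $\widetilde\sigma(p_K)=0$ in Step~2 is left open, and none of the three routes you sketch closes it. The missing idea is that $I=\ker\widetilde{\rho_z}$ is a \emph{maximal} closed ideal of $L^1(G)$. This is Godement's theorem (Proposition~\ref{GodementThm}): a uniformly bounded, topologically irreducible Banach representation whose $K$-isotypic subspaces $E(\tau)$ are finite-dimensional for \emph{every} $\tau\in\widehat K$ has maximal closed kernel.

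The hypothesis holds here. Your $\rho_z$ (the paper's $\pi_{z,p}$) restricted to $K$ is the quasi-regular representation on $L^p(K/K\cap P)$, so $\rho_z(p_\tau)$ has range in the span of the matrix coefficients of $\overline\tau$ (Proposition~\ref{prop:bdadm}). This is exactly the input your argument lacks, since you only use that the trivial $K$-type is one-dimensional.

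Here is why it suffices. Put $A=L^1(G)/I$. With all $K$-types finite-dimensional, each corner $p_\tau A p_\tau$ embeds in, and by Burnside's theorem fills, $\mathrm{End}(E(\tau))$. So a closed ideal strictly containing $I$ contains, by irreducibility, every idempotent $e_\tau\in A$ with $\rho_z(e_\tau)=P_\tau$. It therefore contains all $p_\tau a p_{\tau'}=e_\tau a p_{\tau'}$, and these span a dense subspace of $A$. Everything happens inside $A$ with its quotient norm, so your worry about that norm versus the operator norm never arises. The $I\cap I^*$ idea does not help by itself: you would need $\overline{I+I^*}=L^1(G)$, which is again a maximality statement.

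With maximality the case split disappears. $I\subseteq\ker\widetilde\sigma$ forces $I=\ker\widetilde\sigma$. If $\widetilde\sigma(p_K)=0$, then $L^1(K\backslash G/K)\subseteq I$, contradicting $\chi_z\neq 0$.

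\textbf{Comparison with the paper.} From that point your spherical argument completes the proof, and it is a different ending from the paper's. The paper uses that maximal closed ideals of a Wiener algebra are $*$-closed (Lemma~\ref{lem:propM}). It then shows $I$ is not $*$-closed, because $I\cap L^1(K\backslash G/K)=\ker\chi_{\varphi_z}$ is not $*$-closed (Corollary~\ref{cor:starclosed}). Your comparison of $\varphi_z$ with the positive-definite spherical function $\psi$ of $\sigma$ uses the hypothesis ``not positive-definite'' directly. By contrast, non-$*$-closedness of $\ker\chi_{\varphi_z}$ is equivalent to $\varphi_z\neq\varphi_z^*$. That is in general a stronger requirement, since Proposition~\ref{prop:posdef} only gives the implication positive-definite $\Rightarrow$ $\varphi=\varphi^*$.

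\textbf{Two smaller points.}
\begin{enumerate}[(i)]
\item The exponent that makes the representation isometric is $p=1/\Re(z)$ (Lemma~\ref{lem:isom}), not $\Re(z)=1/p'$. The condition $\Re(z)<1/2$ is what makes $p>2$, so that $L^p(G/P)\subseteq L^2(G/P)$.
\item Agreement on a dense common subspace does not by itself transfer topological irreducibility from $L^2$ to $L^p$: a closed invariant $W\subseteq L^p(G/P)$ could be $L^2$-dense without being $L^p$-dense. Finite-dimensionality of the $K$-types fixes this too. $P_\tau W$ is $L^2$-dense in the finite-dimensional space $E(\tau)$, hence equal to it, so $W$ contains all $K$-finite vectors. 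The paper argues this via smooth vectors in Lemma~\ref{lem:irr}.
\end{enumerate}
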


We remark that, by Proposition \ref{prop:unitarizable}, the matrix coefficient $\varphi_z$ is not positive-definite in the case that the representation $\pi_z$ is not unitarizable. This fact is very useful in applications of the theorem.

We apply Theorem A to give explicit examples of non-Wiener tdlc groups. To do this, we need to study principal series representations of the following groups deeply, which is completed in the proof of Theorem B.

\begin{thmB}
\begin{enumerate}[(i)]
   \item Let $X$ be a connected locally finite graph with infinitely many ends. If $G \le \Aut(X)$ is closed, non-compact and acts transitively on the set of ends of $X$, then $G$ is not Wiener.
   \item Any non-abelian split reductive algebraic group over a non-archimedean local field is not Wiener. 
\end{enumerate}
\end{thmB}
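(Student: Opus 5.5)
The plan is to deduce both parts of Theorem B from Theorem A, using Proposition \ref{prop:unitarizable} to get the non-positive-definiteness. In each case I would first exhibit a compact subgroup $K$ with $(G,K)$ a Gelfand pair, and then identify the boundary $G/P$ with the concrete boundary ($\partial X$ in case (i), $G/B$ for a Borel subgroup $B$ in case (ii)). After that I would find some $z$ with $0<\Re(z)<1/2$ for which $\pi_z$ is topologically irreducible and not unitarizable.

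\textbf{Case (i).} Since $X$ has infinitely many ends and $G$ is closed, non-compact and transitive on ends, the action of $G$ on $X$ should be of general type and cocompact-like. I would then use the structure theory of boundary-transitive groups (Burger--Mozes type arguments, or passing to a $G$-invariant tree via the structure tree of $X$) to reduce to a closed subgroup of $\Aut(T)$ of a semiregular tree acting transitively on $\partial T$. For such groups the stabiliser $K$ of a vertex (or edge) gives a Gelfand pair, since double cosets $K\backslash G/K$ are parametrised by distances, which is commutative. Here $P$ is the stabiliser of an end, and $\mu$ is the harmonic measure on $\partial T$. The Radon--Nikodym derivative is $q^{\pm b}$, with $b$ a Busemann cocycle. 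Hence $\pi_z$ is the classical boundary (spherical principal series) representation of Figà-Talamanca--Nebbia type, with spherical function $\varphi_z$ given by the explicit Cartier formula. I would then invoke the known irreducibility of these boundary representations for $\Re(z)\neq 1/2$ off a discrete exceptional set, for instance via a Poisson transform injectivity argument: a closed invariant subspace containing a non-zero vector must contain $\mathds{1}$, and $\mathds{1}$ is cyclic. For non-positive-definiteness, I would choose $z$ with $0<\Re(z)<1/2$ and $\mathrm{Im}(z)$ generic. The bounded positive-definite spherical functions are known to be exactly those with $z$ on the unitary axis $\Re(z)=1/2$ or on the complementary series segment with $z$ real, so $\varphi_z$ is excluded. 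Alternatively, I would show that $\pi_z$ is not unitarizable because $\pi_z$ and its contragredient $\pi_{1-\bar z}$ are not equivalent.

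\textbf{Case (ii).} Take $K=\mathbf{G}(\mathcal{O})$, a hyperspecial maximal compact subgroup; $(G,K)$ is a Gelfand pair by the Satake isomorphism. Then $P=B$ is a Borel subgroup, which is amenable and cocompact, and $\partial G=G/B$. The representation $\pi_z$ is the unramified principal series induced from the character $\delta_B^{z}$, up to the normalisation $\delta_B^{1/2}$. Irreducibility of unramified principal series is governed by Casselman's criterion: $\mathrm{Ind}(\chi)$ is irreducible unless $\chi\circ\alpha^\vee=|\cdot|^{\pm1}$ for some coroot $\alpha^\vee$. Writing $\chi=\delta_B^{z-1/2}$ and $\delta_B=|\cdot|^{2\rho}$, the condition becomes $q^{(2z-1)\langle\rho,\alpha^\vee\rangle}=q^{\pm1}$. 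For $z$ with $\mathrm{Im}(z)$ avoiding a discrete set, irreducibility holds. Non-abelian means the root system is non-empty, so $G/B$ is non-trivial and $G$ is non-amenable. Non-unitarizability follows because a unitarizable irreducible unramified principal series must have its character equal to the Hermitian dual under some Weyl element. For $\chi=\delta_B^{s}$ with $s=z-1/2$, where $-1/2<\Re s<0$ and $\mathrm{Im}\, s$ is generic, we have $\bar\chi^{-1}=\delta_B^{-\bar s}$. This is not a Weyl conjugate $w\chi$ unless $\mathrm{Im}(s)$ lies in a discrete set, since $w\rho=\rho$ only for $w=1$ and $-\bar s\neq s$.

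\textbf{Main obstacle.} The hard step is case (i). First, I must justify the reduction from a general graph with infinitely many ends to the tree setting: Gelfand pair, identification of $G/P$ with $\partial X$, and of $\mu$. Second, I must prove irreducibility of $\pi_z$ for non-locally-finite-tree-automorphism situations where $G$ is only boundary transitive, not necessarily $2$-transitive. I would attempt the latter via a Mautner-type argument together with the spherical transform: any invariant subspace is $K$-invariant, and the $K$-fixed vectors are spanned by $\mathds{1}$ because $K$ is transitive on the boundary.
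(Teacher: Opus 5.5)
Your overall strategy is the paper's: apply Theorem A at some $z$ with $0<\Re(z)<1/2$ where $\pi_z$ is irreducible but not unitarizable, so that $\varphi_z$ is not positive definite by Proposition \ref{prop:unitarizable}. Part (ii) is essentially the paper's argument: Casselman irreducibility off a countable set, and the observation that an equivalence with the Hermitian dual forces $w\rho=\pm\rho$. Your bookkeeping there (Hermitian dual $\delta_B^{-\bar s}$ with $s=z-\tfrac12$, plus the $2\pi i/\log q$ ambiguity) is, if anything, more careful than the paper's.

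The gaps are in part (i). The first is the reduction. You never say how non-Wiener for a tree group gives non-Wiener for $G$. Instead you plan to transport $K$, $G/P\cong\partial X$ and $\mu$ back to $G$ acting on $X$. But the action on a structure tree need not be faithful, so $G$ is not in general a closed subgroup of $\Aut(T)$. The missing idea is to pass to a quotient. By Nevo's result, $G$ maps onto a closed $H\le\Aut(T_{d_1,d_2})$ that is transitive on $\partial T_{d_1,d_2}$ and has at most two vertex orbits. The Wiener property passes to quotients (Palmer's Theorem 11.5.4), so it suffices to apply Theorem A to $H$, with $K$ a vertex stabiliser and $P$ an end stabiliser. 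The obstacle you list first then disappears.

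The second gap is irreducibility. The Mautner/$K$-fixed-vector argument you propose cannot work, because $L^2(G/P)^K=\mathbb{C}\mathds{1}$ for \emph{every} $z$, including reducible ones. At $z=0$ the constants are invariant, and at $z=1$ the mean-zero functions form a closed invariant subspace with no non-zero $K$-fixed vector. What must be shown depends on $z$:
\begin{itemize}
\item every non-zero closed invariant subspace contains $\mathds{1}$, which is equivalent to $\mathds{1}$ being cyclic for the Hermitian dual $\pi_{1-\bar z}$ (injectivity of the Poisson transform);
\item $\mathds{1}$ is cyclic for $\pi_z$.
\end{itemize}
Both fail on an exceptional set, so a genuine spherical analysis is required. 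The paper does not do this by hand. It identifies $\pi_z$ with Choucroun's $\Pi^\lambda$, $\lambda=(d_1d_2)^{z-1/2}$, and quotes Choucroun's Th\'eor\`eme 2.4.6, applied directly to the boundary-transitive group $H$, so your 2-transitivity worry never arises.

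Similarly, to conclude $\pi_z\not\cong\pi_{1-\bar z}$, or $\varphi_z\neq\varphi_z^*$ as the paper does via Proposition \ref{prop:posdef}, you need to know exactly when $\varphi_{z_1}=\varphi_{z_2}$. This holds precisely when $z_1\equiv z_2$ or $z_1\equiv 1-z_2$ modulo $2\pi i/\log(d_1d_2)$, which is Choucroun's Proposition 3.2.2(1).

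Finally, your list of positive-definite spherical functions is not exhaustive. For $\Aut(T_{q+1})$, twisting the complementary series by the character $g\mapsto(-1)^{d(v,gv)}$ gives more. Since you take $\mathrm{Im}(z)$ generic, this does not affect your argument.
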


Part $(i)$ of Theorem B resolves an open problem from T.\ Palmer's well known two volume encyclopaedia on Banach $*$-algebras \cite{Pal01} regarding whether the group $\Aut(T)$ is Wiener, where $T$ is a regular tree (see the last line of the table on page 1490).

\section{Preliminaries on representations and Gelfand pairs}

Here we shall layout some basic definitions and results that will be used throughout the article.  We assume a rudimentary knowledge of the theory of topological groups, Banach algebras and their various representation theories. We refer the reader to \cite{vD09} or \cite{Sil79} for any unproven facts about analysis on Gelfand pairs and representations respectively.

\subsection{Conventions}
Throughout the article, an ideal is understood to be two-sided unless otherwise stated. Integration on a locally compact group $G$ is always performed with respect to some prior fixed left Haar measure.

\subsection{Gelfand pairs}

We now introduce some of the basic theory of Gelfand pairs which is the centre point of our article.

Let $G$ be a unimodular locally compact group and $K$ a compact subgroup of $G$. Recall that the space $C_c(G)$ of continuous compactly supported functions on $G$ is a $*$-algebra when equipped with the convolution product and involution $f^*(x) := \overline{f(x^{-1})}$.

Corresponding to the pair $(G,K)$ is the so called \hlight{Hecke algebra} defined as
\begin{displaymath} C_c(K\backslash G/K):= \{ f \in C_c(G) : f(kgk') = f(g) \: \forall k,k'\in K,\forall g \in G\}. \end{displaymath}
This is a $*$-subalgebra of $C_c(G)$ consisting of the functions which are \hlight{$K$-bi-invariant}. Of course, we have natural inclusions $C_c(K\backslash G/K) \subseteq C_c(G) \subseteq L^1(G)$, hence we may also complete the $*$-algebra $C_c(K\backslash G/K)$ in the $L^1$-norm. This completion, denote by $L^1(K \backslash G / K)$, is a Banach $*$-subalgebra of $L^1(G)$. 

\begin{dfn}
Let $G$ be a locally compact group and $K$ a compact subgroup of $G$. The pair $(G,K)$ is called a \hlight{Gelfand pair} if the algebra $C_c(K \backslash G /K)$ is commutative. 
\end{dfn}

We remark that, if $(G,K)$ is a Gelfand pair, then, since $C_c(K\backslash G/K)$ is dense in $L^1(K \backslash G /K)$, it follows that $L^1(K \backslash G /K)$ is also commutative. Thus $L^1(K \backslash G /K)$ is a commutative Banach $*$-algebra. We shall now say a bit about the dual of $L^1(K \backslash G/K)$. To do this, we must define the notion of a spherical function corresponding to the pair $(G,K)$.

\begin{dfn}
Let $G$ be a locally compact group and $K$ a compact subgroup of $G$ such that $(G,K)$ is a Gelfand pair. A \hlight{spherical function} on the pair $(G,K)$ (or for short, a spherical function on $G$) is a continuous $K$-bi-invariant function $\varphi:G \rightarrow \mathbb{C}$ such that 
\begin{displaymath} \chi_\varphi(f) := \int_G f(x) \varphi(x^{-1}) \dd x  \end{displaymath}
is a non-trivial multiplicative linear functional of the algebra $C_c(K \backslash G /K)$. 
\end{dfn}

The following result gives a number of equivalent definitions of a spherical function.

\begin{prop}\cite[$\S$6.1(ii)]{vD09}
Let $G$ be a locally compact group and $K$ a compact subgroup of $G$ such that $(G,K)$ is a Gelfand pair. Let $\varphi: G \rightarrow \mathbb{C}$ be a continuous $K$-bi-invariant function such that $\varphi(\id_G) = 1$. Then the following are equivalent:
\begin{enumerate}[(i)]
   \item $\varphi$ is a spherical function;
   \item For all $x,y \in G$ we have that 
   \begin{displaymath} \int_K \varphi(xky) \dd k = \varphi(x)\varphi(y) \end{displaymath}
   where $\dd k$ denotes the normalised Haar measure on $K$;
   \item For every $f \in C_c(K \backslash G/K)$, there exists a complex number $\lambda(f,\varphi)$ such that $f*\varphi = \lambda(f,\varphi)\varphi$.
\end{enumerate}
\end{prop}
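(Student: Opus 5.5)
We prove the cycle (i)$\Rightarrow$(ii)$\Rightarrow$(iii)$\Rightarrow$(i), working throughout with $f,g\in C_c(K\backslash G/K)$. Recall that $G$ is unimodular: this is the standing assumption of this subsection, and it holds for any Gelfand pair. The only tools are Fubini, the invariance of Haar measure on $G$ and on $K$, and the $K$-bi-invariance of $f$, $g$ and $\varphi$.

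\emph{(i)$\Rightarrow$(ii).} We expand $\chi_\varphi(f*g)$ using unimodularity:
\[
\chi_\varphi(f*g)=\int_G\int_G f(x)g(y)\varphi(y^{-1}x^{-1})\dd x\dd y .
\]
By bi-invariance, $f(x)=f(xk^{-1})$, and Haar measure is right invariant. Substituting $x\mapsto xk$ and averaging over $k\in K$ therefore replaces $\varphi(y^{-1}x^{-1})$ by $\int_K\varphi(y^{-1}kx^{-1})\dd k$. Multiplicativity $\chi_\varphi(f*g)=\chi_\varphi(f)\chi_\varphi(g)$ then gives
\[
\int_G\int_G f(x)g(y)\,F(x,y)\dd x\dd y=0,\qquad F(x,y):=\int_K\varphi(y^{-1}kx^{-1})\dd k-\varphi(y^{-1})\varphi(x^{-1}),
\]
for all bi-invariant $f,g$. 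We check that $F$ is continuous and $K$-bi-invariant in each variable separately. This uses left and right invariance of $dk$ together with the bi-invariance of $\varphi$; for example, $x\mapsto xk'$ is absorbed by $k\mapsto kk'$, and $x\mapsto k'x$ is absorbed by the right $K$-invariance of $\varphi$. Consequently, pairing against arbitrary $f,g\in C_c(G)$ is the same as pairing against their $K$-bi-averages. It follows that $F\equiv0$. Replacing $(x,y)$ by $(y^{-1},x^{-1})$ yields (ii). This separation step is the only slightly delicate point: one must verify the separate bi-invariance of $F$ so that the bi-invariant test functions suffice.

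\emph{(ii)$\Rightarrow$(iii).} Fix $k\in K$ and substitute $y\mapsto ky$ using left invariance of Haar measure and $f(ky)=f(y)$:
\[
f*\varphi(x)=\int_G f(y)\varphi(y^{-1}x)\dd y=\int_G f(y)\varphi(y^{-1}k^{-1}x)\dd y .
\]
Averaging over $k$ (using $k\mapsto k^{-1}$) and applying (ii) gives
\[
f*\varphi(x)=\int_G f(y)\int_K\varphi(y^{-1}kx)\dd k\dd y=\varphi(x)\int_G f(y)\varphi(y^{-1})\dd y .
\]
So (iii) holds with $\lambda(f,\varphi)=\chi_\varphi(f)$.

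\emph{(iii)$\Rightarrow$(i).} Evaluating $f*\varphi=\lambda(f,\varphi)\varphi$ at $\id_G$ and using $\varphi(\id_G)=1$ gives $\lambda(f,\varphi)=\chi_\varphi(f)$. Associativity of convolution then gives
\[
\chi_\varphi(f*g)\varphi=(f*g)*\varphi=f*(g*\varphi)=\chi_\varphi(g)\chi_\varphi(f)\varphi,
\]
so $\chi_\varphi$ is multiplicative.

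It remains to show that $\chi_\varphi$ is non-trivial. Take $f\ge0$ bi-invariant with $\int f=1$, supported in a small $K$-bi-invariant neighbourhood of $K$; such $f$ exist by averaging bump functions over $K\times K$. Since $\varphi$ is continuous and right $K$-invariant, $\chi_\varphi(f)=f*\varphi(\id_G)$ is close to $\varphi(\id_G)=1$, hence nonzero. This completes the cycle.
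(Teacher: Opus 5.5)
Your cycle (i)$\Rightarrow$(ii)$\Rightarrow$(iii)$\Rightarrow$(i) is correct, including the separate $K$-bi-invariance of $F$ that lets bi-invariant test functions detect $F\equiv 0$, and the averaged bump function that gives non-triviality of $\chi_\varphi$. The paper gives no proof of its own here, only a citation to van Dijk, and your argument is the standard one found there; one small remark is that unimodularity is never actually used, since the only right translations you perform are by elements of the compact group $K$, and these preserve Haar measure anyway ($\Delta_G|_K\equiv 1$).
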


For many examples of Gelfand pairs, the spherical functions can be explicitly determined in a natural way; see for example \cite[$\S$7]{vD09}.

Now let $\varphi$ be a spherical function on $G$. Then, as mentioned above, the function
\begin{displaymath} \chi_\varphi(f) = \int_G f(x)\varphi(x^{-1}) \dd x \end{displaymath}
is a multiplicative linear functional on $C_c(K \backslash G /K)$. If $\varphi$ is furthermore assumed to be bounded, then by density of $C_c(K \backslash G /K)$ in $L^1(K \backslash G/K)$, the functional $\chi_\varphi$ extends to a multiplicative linear functional of $L^1(K \backslash G/K)$ which we will also denote by $\chi_\varphi$. It is also true that all multiplicative linear functionals of $L^1(K \backslash G/K)$ are formed in this way.

\begin{prop}\cite[Theorem 6.1.7]{vD09}
Let $G$ be a locally compact group and $K$ a compact subgroup of $G$ such that $(G,K)$ is a Gelfand pair. For every bounded spherical function $\varphi$ on $G$, the function
\begin{displaymath} \chi_\varphi(f) = \int_G f(x)\varphi(x^{-1}) \dd x \end{displaymath}
is a multiplicative linear functional on $L^1(K \backslash G /K)$. Conversely, every multiplicative linear functional on $L^1(K \backslash G /K)$ is of this form. 
\end{prop}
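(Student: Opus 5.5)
Both directions rest on the projection $P_K: L^1(G)\to L^1(K\backslash G/K)$, $P_K f(x)=\int_K\int_K f(kxk')\dd k\dd k'$. Equivalently $P_K f = e_K * f * e_K$, where $e_K$ is normalised Haar measure on $K$ (an idempotent measure since $K$ is compact). The plan is to check that $\chi_\varphi$ is multiplicative and bounded on $L^1(K\backslash G/K)$, and then to show that every character arises this way by Riesz duality together with characterisation (ii) of the preceding proposition.

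\emph{First direction.} Let $\varphi$ be a bounded spherical function. Then $|\chi_\varphi(f)|\le \|\varphi\|_\infty\|f\|_1$, so $\chi_\varphi$ is bounded. On the dense subalgebra $C_c(K\backslash G/K)$ it is multiplicative by the definition of a spherical function. Since multiplication is jointly continuous in $L^1$, multiplicativity extends to the completion. It is nonzero because it is already nonzero on the dense subalgebra.

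\emph{Converse.} Let $\chi$ be a nonzero multiplicative linear functional on $L^1(K\backslash G/K)$. Such functionals are automatically bounded with $\|\chi\|\le 1$, as for any commutative Banach algebra. The composite $\chi\circ P_K$ is a bounded functional on $L^1(G)$, so it is given by some $\psi\in L^\infty(G)$ via $\chi(P_Kf)=\int_G f(x)\psi(x^{-1})\dd x$; the inverse is harmless because $G$ is assumed unimodular. Since $P_K$ is invariant under left and right translation by $K$, $\psi$ may be taken $K$-bi-invariant.

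\emph{Continuity of $\psi$.} Multiplicativity gives $\chi(f*g)=\chi(f)\chi(g)$. Fix $g$ with $\chi(g)\neq0$. Then $\chi(f)=\chi(f*g)/\chi(g)$, and this expresses $\psi$ as $\chi(g)^{-1}$ times a convolution of $\psi$ with $g$ (or $\check g$). Taking $g\in C_c(K\backslash G/K)$, such a convolution of an $L^\infty$ function with a $C_c$ function is continuous. So $\psi$ agrees almost everywhere with a continuous bounded $K$-bi-invariant function $\varphi$. Then $\chi=\chi_\varphi$ is a nonzero multiplicative functional on $C_c(K\backslash G/K)$, so $\varphi$ is spherical by definition.

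If one also wants $\varphi(\id_G)=1$: letting $f$ run through an approximate identity of bi-invariant functions and using the product formula (ii) gives $\varphi(\id_G)=1$.

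\emph{Main obstacle.} The delicate step is this continuity regularisation: one has to track the convolution identity carefully, including the variable $x^{-1}$ and the order of $f$ and $g$. The fallback would be to cite that Gelfand spectra of $L^1$-algebras are represented by continuous functions. Everything else is routine.
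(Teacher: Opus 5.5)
Your proposal is correct and follows the standard proof of the cited result \cite[Theorem 6.1.7]{vD09}: the paper itself only records the density-extension argument for the forward direction, which you reproduce, and cites van Dijk for the converse, whose standard argument is exactly your route of $K$-averaging, $L^1$--$L^\infty$ duality, and regularisation of $\psi$ by convolution with a bi-invariant $g$ satisfying $\chi(g)\neq 0$. One small caveat on your optional last step: the approximate-identity argument alone already gives $\varphi(\id_G)=1$, since $\chi(f_\alpha)\to\varphi(\id_G)$ while $\chi(f_\alpha)\chi(g)=\chi(f_\alpha*g)\to\chi(g)$, whereas invoking characterisation (ii) would be circular because that proposition assumes $\varphi(\id_G)=1$ from the outset.
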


So we now have a description of the multiplicative linear functionals on $C_c(K \backslash G /K)$ and $L^1(K \backslash G /K)$. We shall now give a description of the $*$-homomorphisms $\chi: L^1(K \backslash G /K) \rightarrow \mathbb{C}$. In particular, we need to determine which multiplicative linear functionals $\chi$ of $L^1(K \backslash G /K)$ satisfy $\chi(f^*) = \overline{\chi(f)}$ for all $f \in L^1(K \backslash G /K)$. To do this, we need to consider the positive definite spherical functions.

\begin{dfn}
Let $G$ be a locally compact group and $\varphi: G \rightarrow \mathbb{C}$ a continous function on $G$. Then $\varphi$ is called \hlight{positive definite} if for all $f \in C_c(G)$
\begin{displaymath} \int_G\int_G f(x)\overline{f(y)} \varphi(y^{-1}x) \dd x \dd y \ge 0. \end{displaymath}
\end{dfn}

We then have the following result. 

\begin{prop}\cite[Lemma 5.1.8]{vD09}\label{prop:posdef}
Let $G$ be a locally compact group and $K$ a compact subgroup of $G$ such that $(G,K)$ is a Gelfand pair. Let $\varphi$ be a spherical function on $(G,K)$. If $\varphi$ is positive definite, then for all $g \in G$, $\varphi(g) = \overline{\varphi(g^{-1})} =: \varphi^*(g)$. In particular, $\chi_\varphi(f^*) = \overline{\chi_\varphi(f)}$ for all $f \in C_c(G)$, so $\chi_\varphi: L^1(K \backslash G / K) \rightarrow \mathbb{C}$ is a $*$-homomorphism if $\varphi$ is positive-definite.
\end{prop}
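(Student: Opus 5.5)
The plan is to first prove the symmetry $\varphi(g)=\overline{\varphi(g^{-1})}$ for an arbitrary continuous positive definite function, using only the integral definition. Then I will deduce the statement about $\chi_\varphi$ by a change of variables.

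\emph{Step 1 (discretisation).} I will show that the integral positivity condition implies that for any finitely many points $x_1,\dots,x_n\in G$ the matrix $A=(\varphi(x_j^{-1}x_i))_{i,j}$ is positive semidefinite. Fix scalars $c_1,\dots,c_n$. Choose a symmetric neighbourhood basis $U$ of $\id_G$ and nonnegative $u_U\in C_c(G)$ supported in $U$ with $\int u_U=1$. Put $f_U(x)=\sum_i c_i u_U(x_i^{-1}x)$, i.e. bumps concentrated near each $x_i$. Expanding gives
\begin{displaymath} \int_G\int_G f_U(x)\overline{f_U(y)}\varphi(y^{-1}x)\dd x\dd y=\sum_{i,j}c_i\overline{c_j}\int\int u_U(x_i^{-1}x)u_U(x_j^{-1}y)\varphi(y^{-1}x)\dd x\dd y . \end{displaymath}
By continuity of $\varphi$, each double integral tends to $\varphi(x_j^{-1}x_i)$ as $U$ shrinks. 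Hence $\sum_{i,j}c_i\overline{c_j}\varphi(x_j^{-1}x_i)\ge 0$. This limiting argument is the only mildly delicate point. It needs uniform continuity of $\varphi$ on a compact neighbourhood of the finitely many points $x_j^{-1}x_i$, which is routine.

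\emph{Step 2 (symmetry).} Apply Step 1 with $n=2$, $x_1=g$, $x_2=\id_G$. A positive semidefinite complex matrix is Hermitian. The $(1,2)$ entry is $\varphi(g)$ and the $(2,1)$ entry is $\varphi(g^{-1})$, so $\varphi(g^{-1})=\overline{\varphi(g)}$, which is the first claim. As a by-product, positivity of the $2\times 2$ determinant gives $|\varphi(g)|\le\varphi(\id_G)=1$, so $\varphi$ is bounded. This boundedness is what lets $\chi_\varphi$ extend to $L^1(K\backslash G/K)$.

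\emph{Step 3 ($*$-homomorphism).} Recall that $G$ is unimodular in the standing setting. For $f\in C_c(G)$, write out $\chi_\varphi(f^*)$ and substitute $x\mapsto x^{-1}$, which preserves Haar measure by unimodularity. Then apply Step 2 to get
\begin{displaymath} \chi_\varphi(f^*)=\int_G\overline{f(x^{-1})}\varphi(x^{-1})\dd x=\int_G\overline{f(x)}\varphi(x)\dd x=\int_G\overline{f(x)}\,\overline{\varphi(x^{-1})}\dd x=\overline{\chi_\varphi(f)}. \end{displaymath}
Since $\varphi$ is bounded, $\chi_\varphi$ is $L^1$-continuous, and the involution is isometric on $L^1(G)$. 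The identity therefore extends by density of $C_c(K\backslash G/K)$ to all of $L^1(K\backslash G/K)$. Combined with multiplicativity, this shows $\chi_\varphi$ is a $*$-homomorphism.
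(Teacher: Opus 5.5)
Your proof is correct. The paper gives no argument of its own for this statement; it simply cites van Dijk. Your route is the standard proof of that lemma: pass from the integral condition to positive semidefiniteness of $(\varphi(x_j^{-1}x_i))_{i,j}$ via approximate identities, then read off $\varphi(g^{-1})=\overline{\varphi(g)}$ and $|\varphi(g)|\le\varphi(\id_G)=1$ from the $2\times 2$ case. Your Step 3 is exactly the unimodular substitution $x\mapsto x^{-1}$ that the paper itself uses in the proof of Corollary~\ref{cor:starclosed}, and the boundedness you extract is what makes $\chi_\varphi$ well defined on $L^1(K\backslash G/K)$.
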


In a similar light, we also have the following equivalences.

\begin{prop}\cite[Lemma 9.2.5]{Wol07}\label{prop:posdefsph}
Let $G$ be a locally compact group and $K$ a compact subgroup of $G$ such that $(G,K)$ is a Gelfand pair. Let $\varphi$ be a spherical function on $(G,K)$. Then the following are equivalent:
\begin{enumerate}[(i)]
   \item $\varphi$ is positive definite;\
   \item $\chi_\varphi(f*f^*) \ge 0$ for all $f \in C_c(K \backslash G /K)$.
\end{enumerate}
\noindent If $\varphi$ is furthermore assumed to be bounded, then $(i)$ and $(ii)$ are equivalent to:
\begin{enumerate}[(i)]
   \item[(iii)] $\chi_\varphi(f*f^*) \ge 0$ for all $f \in L^1(K \backslash G /K)$.
\end{enumerate}
\end{prop}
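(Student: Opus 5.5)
The plan is to establish (i)$\Rightarrow$(ii), then (ii)$\Rightarrow$(i), and finally, for bounded $\varphi$, (ii)$\Leftrightarrow$(iii) by density. The main tool is the functional equation from part (ii) of the first proposition on spherical functions (\cite[$\S$6.1(ii)]{vD09}):
\begin{displaymath} \int_K \varphi(xky)\dd k=\varphi(x)\varphi(y). \end{displaymath}
As in the setting of this section, I take $G$ unimodular and write $\omega(h):=\int_G h(z)\varphi(z^{-1})\dd z$ for $h \in C_c(G)$, so that $\chi_\varphi=\omega|_{C_c(K\backslash G/K)}$. Unwinding the definition of the convolution and the involution gives
\begin{displaymath} \omega(f*f^*)=\int_G\int_G f(x)\overline{f(y)}\,\varphi(x^{-1}y)\dd x\dd y, \end{displaymath}
up to the exact placement of inverses, which I will fix when writing this out.

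\emph{(i)$\Rightarrow$(ii).} Suppose $\varphi$ is positive definite. By Proposition \ref{prop:posdef}, $\varphi(x^{-1}y)=\overline{\varphi(y^{-1}x)}$. Hence $\omega(f*f^*)$ is the complex conjugate of the positive-definiteness form evaluated at $f$, and is therefore $\ge 0$. This holds for all $f\in C_c(G)$, in particular for $K$-bi-invariant $f$.

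\emph{(ii)$\Rightarrow$(i).} Let $P$ denote normalised Haar measure on $K$, viewed as an idempotent measure with $P^*=P$. Given $f\in C_c(G)$, the positive-definiteness form $\int\int f(x)\overline{f(y)}\varphi(y^{-1}x)$ equals $\omega(f^\vee)$-type expressions of the shape $\omega(g*g^*)$, after possibly replacing $f$ by a suitable reflection and conjugation; I will pin down the correct normalisation when writing it out.

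\emph{Reduction to left $K$-invariant $g$.} Since $\varphi$ is $K$-bi-invariant, $\omega(h)=\omega(P*h*P)$ for all $h$. Applying this to $h=f*f^*$ gives
\begin{displaymath} \omega(f*f^*)=\omega\bigl((P*f)*(P*f)^*\bigr). \end{displaymath}
So we may assume $g:=P*f$ is left $K$-invariant.

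\emph{Factorisation via the functional equation.} For left $K$-invariant $g$, I can insert a $K$-average in the middle of the double integral, since $g(kx)=g(x)$. The functional equation then factorises the kernel, giving
\begin{displaymath} \omega(g*g^*)=A(g)\,B(g), \end{displaymath}
where $A(g)=\int g(x)\varphi(x^{-1})\dd x$ and $B(g)=\int\overline{g(y)}\varphi(y)\dd y$. Both $A$ and $B$ are unchanged if $g$ is replaced by $h:=g*P\in C_c(K\backslash G/K)$, because $\varphi$ is right $K$-invariant. Running the same computation for $h$ therefore gives $\omega(g*g^*)=A(h)B(h)=\omega(h*h^*)=\chi_\varphi(h*h^*)\ge0$ by (ii). This proves (i).

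The step I expect to be the main obstacle is this middle factorisation. One must track inverses and conjugations carefully so that the two factors match exactly the factorisation of $\chi_\varphi(h*h^*)$ for the bi-invariant $h$. Note that $A\cdot B$ is not automatically $|A|^2$: this is exactly why not every spherical function is positive definite, and why hypothesis (ii) is genuinely needed rather than the factorisation giving positivity for free.

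\emph{(ii)$\Leftrightarrow$(iii) for bounded $\varphi$.} Assume $\varphi$ is bounded. Then $\chi_\varphi$ is continuous on $L^1(K\backslash G/K)$, and $f\mapsto f*f^*$ is continuous in the $L^1$-norm. Since $C_c(K\backslash G/K)$ is dense in $L^1(K\backslash G/K)$, positivity on the dense subalgebra passes to the closure, giving (ii)$\Rightarrow$(iii). The converse (iii)$\Rightarrow$(ii) is immediate by restriction.
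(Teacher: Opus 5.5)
\textbf{There is a genuine gap in (ii)$\Rightarrow$(i), and it cannot be closed: that implication is false as stated.}

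\emph{Where the argument breaks.} The failure is at the factorisation step, and it is not merely bookkeeping. For unimodular $G$ one has
\[
\omega(f*f^*)=\int_G\int_G f(x)\overline{f(y)}\,\varphi(yx^{-1})\dd x\dd y .
\]
A $K$-average can be pushed into the middle of $yx^{-1}$ only via the substitution $x\mapsto xk$. That needs $f$ to be \emph{right} $K$-invariant, and then indeed $\omega(f*f^*)=A(f)B(f)$. For your left-invariant $g=P*f$, the substitution $x\mapsto kx$ instead puts $k^{-1}$ at the right end of $yx^{-1}$, where the bi-invariance of $\varphi$ simply absorbs it. Your provisional kernel $\varphi(x^{-1}y)$ is what made left invariance look sufficient. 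Nor can the reduction be made right-handed: $(f*P)*(f*P)^*=f*P*f^*$, and $\omega(f*P*f^*)\neq\omega(f*f^*)$ in general. Indeed, since $A(P*f)=A(f)$ and $B(P*f)=B(f)$, your chain would give $\omega(f*f^*)=A(f)B(f)$ for every $f\in C_c(G)$. Polarising, $\varphi(yx^{-1})=\varphi(y)\varphi(x^{-1})$ for all $x,y$, i.e.\ every spherical function would be a character of $G$. Bi-invariant test functions only detect the component along the $K$-fixed vector, so they cannot control the positive-definiteness form on all of $C_c(G)$.

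\emph{Why no repair is possible.} Since $\chi_\varphi$ is multiplicative on $C_c(K\backslash G/K)$ and $\chi_\varphi(f^*)=\overline{\chi_{\varphi^*}(f)}$ (the computation in the proof of Corollary~\ref{cor:starclosed}), condition (ii) says $\chi_\varphi(f)\overline{\chi_{\varphi^*}(f)}\ge0$ for all $f$. If $\varphi\neq\varphi^*$, the characters $\chi_\varphi$ and $\chi_{\varphi^*}$ are linearly independent, so some $f$ has $\chi_\varphi(f)=1$ and $\chi_{\varphi^*}(f)=i$. Hence (ii) is equivalent to $\varphi=\varphi^*$, which is strictly weaker than (i). Here is a counterexample. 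Take the Gelfand pair $G=\mathbb{Z}\rtimes\{\pm1\}$, $K=\{\pm1\}$, and $\varphi(n,\epsilon)=\cosh(nt)$ with $t=\log 2$. This $\varphi$ is spherical, since the functional equation reduces to $\tfrac12[\cosh((m+n)t)+\cosh((m-n)t)]=\cosh(mt)\cosh(nt)$. It is real-valued and inversion-invariant, so it satisfies (ii). But it is unbounded, hence not positive definite.

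Boundedness does not rescue (iii)$\Rightarrow$(i) either. Take a group with property (T), such as $SL_3(\mathbb{Q}_p)$ with $K=SL_3(\mathbb{Z}_p)$. The real-parameter spherical functions are bounded and Hermitian, and they converge to $1$ uniformly on compacta without being equal to $1$. By (T), however, a positive-definite spherical function sufficiently close to $1$ on a suitable compact set is identically $1$.

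\emph{What survives.} The true statement is (i)$\Rightarrow$(ii)$\Leftrightarrow[\varphi=\varphi^*]$, with (ii)$\Leftrightarrow$(iii) for bounded $\varphi$ by your density argument. Your proof of (i)$\Rightarrow$(ii) is fine once complex conjugation is replaced by the reflection $\check f(x)=f(x^{-1})$. By the display above, $\omega(f*f^*)$ is exactly the positive-definiteness form evaluated at $\check f$.
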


A consequence of Proposition \ref{prop:posdef} is the following.

\begin{cor}\label{cor:starclosed}
Let $G$ be a locally compact group and $K$ a compact subgroup of $G$ such that $(G,K)$ is a Gelfand pair. Suppose that there exists a bounded spherical function $\varphi$ on $G$ that is not positive definite. Then, the kernel of $\chi_\varphi$ in $L^1(K \backslash G/K)$ is not $*$-closed.
\end{cor}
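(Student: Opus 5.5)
We argue by contraposition. Let $I := \ker \chi_\varphi \subseteq L^1(K \backslash G/K)$. We will show that if $I$ were $*$-closed, then $\varphi$ would be positive definite, which contradicts the hypothesis.

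The first step is to show that $\chi_\varphi$ is then a $*$-homomorphism. Suppose $I^* = I$. Define the functional $\psi(f) := \overline{\chi_\varphi(f^*)}$ on $L^1(K \backslash G/K)$. It is linear, and it is multiplicative, since $(f*g)^* = g^* * f^*$ and the algebra is commutative. It is nonzero, because $\chi_\varphi$ is nonzero and $*$ is a bijection. Its kernel is $I^* = I = \ker\chi_\varphi$. Two nonzero linear functionals with the same kernel are proportional, so $\psi = c\,\chi_\varphi$ for some $c \in \mathbb{C}$. Pick $f$ with $\chi_\varphi(f) \neq 0$. Multiplicativity gives
\[ c^2\chi_\varphi(f)^2 = \psi(f)^2 = \psi(f^2) = c\,\chi_\varphi(f)^2, \]
so $c = 1$. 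Hence $\chi_\varphi(f^*) = \overline{\chi_\varphi(f)}$ for all $f$, i.e.\ $\chi_\varphi$ is a $*$-homomorphism.

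The second step uses positivity of $*$-characters. For $f \in L^1(K \backslash G/K)$ we get
\[ \chi_\varphi(f*f^*) = \chi_\varphi(f)\,\overline{\chi_\varphi(f)} = |\chi_\varphi(f)|^2 \ge 0. \]
Since $\varphi$ is bounded, Proposition \ref{prop:posdefsph}, via the implication (iii)$\Rightarrow$(i), shows that $\varphi$ is positive definite. This contradicts the hypothesis, so $I$ is not $*$-closed.

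The only point needing care is the first step, namely the proportionality of functionals with equal kernels and the normalisation $c = 1$. This is elementary linear algebra combined with multiplicativity. Everything else is a direct appeal to Proposition \ref{prop:posdefsph}.
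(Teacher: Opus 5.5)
Your first step is correct and matches the paper's own argument. By the paper's computation $\chi_\varphi(f^*)=\overline{\chi_{\varphi^*}(f)}$, your $\psi$ is exactly $\chi_{\varphi^*}$. Your normalisation $c=1$ via multiplicativity replaces the paper's evaluation at $\mathds{1}_K$. Both arguments therefore reach $\varphi=\varphi^*$.

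\textbf{The gap is your second step.} For multiplicative $\chi_\varphi$ you have $\chi_\varphi(f*f^*)=\chi_\varphi(f)\overline{\chi_{\varphi^*}(f)}$. So condition (iii) of Proposition~\ref{prop:posdefsph} is \emph{equivalent} to $\varphi=\varphi^*$, and the implication (iii)$\Rightarrow$(i) you invoke just asserts ``Hermitian implies positive definite''. You used that proposition exactly as the paper states it, but that implication is false in general. Positive definiteness must be tested against all $f\in C_c(G)$, equivalently all right-$K$-invariant $f$. Bi-invariant test functions detect nothing beyond the symmetry $\varphi=\varphi^*$. Put differently, a $*$-character of $L^1(K\backslash G/K)$ need not be bounded for the $C^*(G)$-norm. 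The paper's proof has the same hole: after obtaining $\varphi=\varphi^*$ it cites Proposition~\ref{prop:posdef}, which only gives positive definite $\Rightarrow$ $\varphi=\varphi^*$.

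\textbf{No argument can close this gap, because the statement is false as written.} Take $G=Sp_4(\mathbb{Q}_p)$ and $K=Sp_4(\mathbb{Z}_p)$.
\begin{enumerate}[(a)]
\item Since $-1\in W$, one has $KgK=Kg^{-1}K$ for all $g$. Hence every real-valued $K$-bi-invariant $\varphi$ satisfies $\varphi=\varphi^*$.
\item For real $z\in(0,1)$, the paper's $\varphi_z(g)=\int_{G/P}\bigl(\frac{\dd g\mu}{\dd\mu}\bigr)^z\dd\mu$ is a real spherical function with $0<\varphi_z\le 1$, by Jensen.
\item $\varphi_z\not\equiv 1$, because $\mu$ is not $G$-invariant.
\item $\varphi_z\to 1$ uniformly on compacta as $z\to 0^+$.
\end{enumerate}
So $\ker\chi_{\varphi_z}$ is $*$-closed, yet $\varphi_z$ is not positive definite for small $z$. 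Otherwise the associated irreducible unitary spherical representations would have almost invariant vectors, and property (T) would force them to be trivial. Kostant's result that the spherical complementary series of $Sp(n,1)$ stops short of the trivial representation gives rank-one examples of the same phenomenon.

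What your argument and the paper's actually prove is the corollary with ``not positive definite'' replaced by ``$\varphi\neq\varphi^*$''. That stronger hypothesis is what must be verified wherever the corollary is applied.
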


\begin{proof}
Suppose for a contradiction that $\ker_{L^1(K\backslash G/K)}(\chi_\varphi)$ is $*$-closed. Let $f \in \ker_{L^1(K\backslash G/K)}(\chi_\varphi)$. We compute that 
    \begin{align*}
        \chi_\varphi(f^*) &= \int_G f^*(x) \varphi(x^{-1}) \dd x = \int_G \overline{f(x^{-1})}\varphi(x^{-1}) \dd x \\
        &= \overline{\int_G f(x)\overline{\varphi(x)} \dd x} = \overline{\int_G f(x) \varphi^*(x^{-1}) \dd x} = \overline{\chi_{\varphi^*}(f)}.
    \end{align*}
In particular, $f^* \in \ker_{L^1(K\backslash G/K)}(\chi_\varphi)$ if and only if $f \in \ker_{L^1(K\backslash G/K)}(\chi_{\varphi^*})$. Since $\ker_{L^1(K\backslash G/K)}(\chi_\varphi)$ is $*$-closed by assumption, one deduces that $\ker_{L^1(K\backslash G/K)}(\chi_\varphi) = \ker_{L^1(K\backslash G/K)}(\chi_{\varphi^*})$. This implies that the span of $\chi_\varphi$ and the span of $\chi_{\varphi^*}$ are equal in the dual $L^1(K\backslash G/K)^*$. In particular, there exists a constant $\lambda \in \mathbb{C}$ such that $\lambda\chi_{\varphi} = \chi_{\varphi^*}$. Now let $\mathds{1}_K \in L^1(K\backslash G /K)$ be the characteristic function on $K$. Assume that the Haar measure on $K$ has been normalised to have volume 1. Then one computes that $\chi_{\varphi^*}(\mathds{1}_K) = 1$ and $\lambda\chi_{\varphi}(\mathds{1}_K) = \lambda$. Thus $\lambda =1$ since $\lambda \chi_\varphi = \chi_{\varphi^*}$, and so $\chi_{\varphi} = \chi_{\varphi^*}$. It follows from this that $\varphi = \varphi^*$. But, by Proposition \ref{prop:posdef}, this contradicts the fact that $\varphi$ is assumed to not be positive definite.
\end{proof}

\subsection{Smooth and admissible representations}

We start by recalling some definitions. Throughout this section all vector spaces will be over the field of complex numbers. A representation of a locally compact group $G$ is a pair $(\pi,V)$, where $V$ is a vector space and $\pi: G \rightarrow GL(V)$ is a homomorphism.

\begin{dfn}
Let $G$ be a tdlc group and $(\pi,V)$ a representation of $G$.
\begin{enumerate}[(i)]
   \item A vector $v \in V$ is called \hlight{smooth} if the subgroup $G_v := \{ g \in G : \pi(g)v = v\}$ of $G$ is open. \
   \item The representation $\pi$ is called \hlight{smooth} if every $v \in V$ is smooth. \
   \item The representation $\pi$ is \hlight{admissible} if for every compact open subgroup $K \le G$, the subspace of fixed vectors $V^K:= \{ v \in V : \pi(k)v = v \; \forall k \in K\}$ is finite-dimensional. \
   \item Suppose that there is a norm $\norm{\cdot}$ on $V$. Then $\pi$ is \hlight{continuous} if for all $v \in V$, the map $g \mapsto \pi(g)v$ is continuous with respect to the group topology on $G$ and the topology induced by the norm on $V$. \
\end{enumerate}
\end{dfn}

There exists the following equivalent characterisation of a smooth representation. We denote \hlight{the set of compact open subgroups} of a tdlc group by $\COS(G)$. The proof of the following proposition is obvious after noting that every open subgroup of a tdlc group contains a compact open subgroup.

\begin{prop}\label{prop:smoothdfn}
Let $G$ be a tdlc group and $(\pi, V)$ a representation of $G$. Then the following are equivalent:
\begin{enumerate}
   \item[(i)] $\pi$ is smooth;
   \item[(ii)] $V = \bigcup_{K \in \COS(G)} V^K$.
\end{enumerate}
\end{prop}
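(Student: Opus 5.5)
We prove the two implications directly from the definition of smoothness, using only that the compact open subgroups form a neighbourhood basis of the identity in a tdlc group (van Dantzig's theorem). This gives the remark that every open subgroup of $G$ contains a compact open subgroup.

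For (i) $\Rightarrow$ (ii), let $v \in V$. Since $\pi$ is smooth, the stabiliser $G_v = \{g \in G : \pi(g)v = v\}$ is an open subgroup of $G$. By van Dantzig's theorem it contains some $K \in \COS(G)$. Every $k \in K$ then satisfies $\pi(k)v = v$, so $v \in V^K$. Hence $V \subseteq \bigcup_{K \in \COS(G)} V^K$. The reverse inclusion holds because each $V^K$ is a subspace of $V$, so equality follows.

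For (ii) $\Rightarrow$ (i), let $v \in V$ and choose $K \in \COS(G)$ with $v \in V^K$. Then $K \subseteq G_v$. The set $G_v$ is a subgroup, since $\pi$ is a homomorphism. It contains the open subgroup $K$, so it is a union of cosets $gK$ with $g \in G_v$, each of which is open. Therefore $G_v$ is open, $v$ is smooth, and $\pi$ is smooth.

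The only non-formal input is the existence of a compact open subgroup inside an arbitrary open subgroup, i.e. van Dantzig's theorem. Once that is granted, both directions are immediate.
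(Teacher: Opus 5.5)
Your proof is correct and follows the paper's own approach: the paper simply notes that the result is obvious once one knows every open subgroup of a tdlc group contains a compact open subgroup (van Dantzig), and your two implications spell out exactly that argument. In particular, your observation that $G_v$ is a subgroup containing the open subgroup $K$, and is therefore a union of open cosets, is the implicit step for (ii) $\Rightarrow$ (i).
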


We also note the following well known equivalences of the property of a representation being admissible. See for example, \cite[$\S$1.5]{Sil79}.

\begin{prop}\label{prop:admissibledfn}
Let $G$ be a tdlc group and $(\pi,V)$ a smooth representation of $G$. Then the following are equivalent:
\begin{enumerate}[(i)]
   \item $(\pi,V)$ is admissible;
   \item For every $K \in \COS(G)$, $V$ decomposes as a direct sum of irreducible smooth $K$-modules each occuring with finite multiplicity.
\end{enumerate}
\end{prop}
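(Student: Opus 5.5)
Since the representation is smooth, (ii) and (i) are linked through the fixed-vector spaces $V^K$ for $K \in \COS(G)$, which by Proposition \ref{prop:smoothdfn} exhaust $V$. The key tool is that for $K$ compact (in particular compact open), every smooth representation of $K$ is semisimple. Indeed, any $v \in V$ is fixed by some open normal subgroup $N \le K$ (an open subgroup of a compact group has finite index, so its normal core is still open). Then $\spn\{\pi(k)v : k \in K\}$ is a finite-dimensional representation of the finite group $K/N$. By Maschke's theorem this span is a direct sum of irreducible $K$-modules. Hence $V$ is a sum, and therefore a direct sum, of irreducible smooth $K$-modules. Each such irreducible $\sigma$ factors through some $K/N$ with $N$ open normal, so it is finite-dimensional. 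We then write $V = \bigoplus_{\sigma} V(\sigma)$ as the decomposition into isotypic components.

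For (i)$\Rightarrow$(ii), I fix $K$ and an irreducible smooth $\sigma$ of $K$, trivial on some open normal $N \le K$. Every vector in the $\sigma$-isotypic component is $N$-fixed, so $V(\sigma) \subseteq V^N$. Since $N$ is compact open, $V^N$ is finite-dimensional by admissibility. Hence $V(\sigma)$ is finite-dimensional, and so $\sigma$ occurs with multiplicity at most $\dim V^N/\dim\sigma < \infty$.

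For (ii)$\Rightarrow$(i), I fix $K \in \COS(G)$ and use the decomposition of $V$ into irreducible $K$-modules with finite multiplicities. I have $V^K = \bigoplus_\sigma V(\sigma)^K$, and $V(\sigma)^K = 0$ unless $\sigma$ is the trivial character, because an irreducible module with a nonzero $K$-fixed vector is trivial. So $V^K$ is the trivial isotypic component, and its dimension is the (finite) multiplicity of the trivial representation. This also uses the standard fact that isotypic components are well defined, i.e.\ multiplicities do not depend on the chosen decomposition. Hence $V^K$ is finite-dimensional and $\pi$ is admissible.

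The only step requiring care is the semisimplicity of smooth $K$-modules. That step reduces to finite groups via the open normal core, and I expect no real obstacle there.
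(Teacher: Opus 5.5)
Your proof is correct. The paper gives no argument of its own here; it calls the equivalence well known and cites \cite[$\S$1.5]{Sil79}. There is therefore no route in the paper to compare against, and your proposal supplies a complete argument.

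Your argument is the standard one:
\begin{enumerate}[(a)]
   \item Smooth representations of the profinite group $K$ are semisimple. You get this by passing to the open normal core $N$ and applying Maschke's theorem to the finite quotient $K/N$.
   \item For (i)$\Rightarrow$(ii), you use $V(\sigma)\subseteq V^N$ with $N\in\COS(G)$.
   \item For (ii)$\Rightarrow$(i), you identify $V^K$ with the trivial isotypic part.
\end{enumerate}

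One small simplification: in (ii)$\Rightarrow$(i) you do not need the fact that multiplicities are independent of the chosen decomposition. Take any decomposition $V=\bigoplus_i W_i$ into irreducible $K$-stable summands. Directness of the sum gives $V^K=\bigoplus_i W_i^K$. An irreducible $W_i$ with $W_i^K\neq 0$ must be the trivial representation. So $\dim V^K$ is simply the number of trivial summands in that decomposition, which is finite by hypothesis.
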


Given any representation $(\pi,V)$ of a tdlc group $G$, the space $V^\infty := \bigcup_{K \in \COS(G)} V^K$ is a subspace of $V$ invariant under the action of $\pi$, and it is precisely the subspace of smooth vectors in $V$.

\begin{prop}\label{prop:completionrep}
Let $G$ be a tdlc group and $(\pi, V)$ a smooth admissible representation of $G$. Let $\norm{\cdot}$ be a norm on $V$ with respect to which $\pi$ is continuous and denote by $(\widetilde\pi,\widetilde V)$ the completion of this representation with respect to this norm. Then the following are true:
\begin{enumerate}[(i)]
   \item $\widetilde{V}^K = V^K$ for all compact open subgroups $K$ of $V$;
   \item $\widetilde{V}^\infty = V$;
   \item Every closed $G$-invariant subspace of $\widetilde{V}$ intersects $V$ non-trivially. In particular, $(\pi,V)$ is algebraically irreducible if and only if $(\widetilde\pi,\widetilde V)$ is topologically irreducible. 
\end{enumerate}
\end{prop}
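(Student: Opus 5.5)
The plan is to use the averaging projections onto fixed vectors. For a compact open subgroup $K$, write
\[
P_K := \int_K \widetilde\pi(k)\dd k ,
\]
with the Haar measure on $K$ normalised to have total mass $1$. This is a bounded idempotent on $\widetilde V$; continuity of $\widetilde\pi$ makes the Bochner integral well defined, and its range is exactly $\widetilde V^K$. Its restriction to $V$ is the corresponding projection onto $V^K$. (The statement writes ``compact open subgroups $K$ of $V$''; this should read ``of $G$''.)

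\textbf{Part (i).} The inclusion $V^K \subseteq \widetilde V^K$ is trivial. For the reverse inclusion, take $w\in \widetilde V^K$ and choose $v_n\in V$ with $v_n\to w$. Then $P_K v_n \to P_K w = w$. Each $P_K v_n$ is $K$-fixed, and it lies in $V$. To see this, note that $v_n$ is fixed by some open subgroup, hence by some compact open normal-in-$K$ subgroup $K_n$; for instance, take the intersection of the finitely many $K$-conjugates of a compact open subgroup of $G_{v_n}\cap K$. Then $P_K v_n$ is a finite average of the vectors $\pi(k)v_n$ over $K/K_n$, so it lies in $V^K$. Since $V^K$ is finite dimensional by admissibility, it is closed in $\widetilde V$, and therefore $w\in V^K$.

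\textbf{Part (ii).} Combine (i) with Proposition \ref{prop:smoothdfn}:
\[
\widetilde V^\infty=\bigcup_{K}\widetilde V^K=\bigcup_K V^K = V.
\]

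\textbf{Part (iii).} Let $W\subseteq\widetilde V$ be a nonzero closed $G$-invariant subspace, and pick $0\neq w\in W$. The main step is to show that $P_K w\neq 0$ for some $K$. Since $V$ is dense, choose $v\in V$ with $\norm{w-v}<\varepsilon$, and pick $K$ fixing $v$. Then
\[
\norm{P_Kw - v} = \norm{P_K(w-v)} \le C_K\,\varepsilon ,
\]
where $C_K = \sup_{k\in K}\norm{\widetilde\pi(k)}$. A priori $C_K$ depends on $K$, and this is the obstacle. To handle it, fix one compact open subgroup $K_0$ and set $C=\sup_{k\in K_0}\norm{\widetilde\pi(k)}$, which is finite by the uniform boundedness principle since $K_0$ is compact and $\widetilde\pi$ is strongly continuous. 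For every $K\le K_0$ we then have $C_K\le C$.

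Now choose $v$ with $\norm{w-v}<\varepsilon$, where $\varepsilon(1+C)<\norm{w}$, and choose $K\le K_0$ fixing $v$. Then
\[
\norm{P_Kw - w}\le \norm{P_K w - v}+\norm{v-w} \le (C+1)\varepsilon<\norm w ,
\]
so $P_Kw\neq0$. Because $W$ is closed and $G$-invariant, $P_K w\in W$. By (i), $P_K w\in\widetilde V^K=V^K\subseteq V$. Hence $W\cap V\neq 0$.

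\textbf{The irreducibility equivalence.} Suppose first that $V$ is algebraically irreducible, and let $W$ be a nonzero closed invariant subspace of $\widetilde V$. Then $W\cap V$ is a nonzero invariant subspace of $V$, so $W\cap V=V$. Thus $W\supseteq V$, and since $V$ is dense and $W$ is closed, $W=\widetilde V$.

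Conversely, suppose $\widetilde V$ is topologically irreducible, and let $U\subseteq V$ be a nonzero invariant subspace. Then $\overline U=\widetilde V$. Applying (i) to this closure gives, for each $K$,
\[
\widetilde V^K=P_K\overline U\subseteq \overline{P_K U}=P_KU\subseteq U .
\]
Here $P_K U\subseteq U$ holds because $P_K$ acts on $V$ by finite averages, which preserve the invariant subspace $U$. The closure $\overline{P_K U}$ equals $P_KU$ because $P_KU$ lies in the finite-dimensional space $V^K$. Therefore
\[
V=\bigcup_K \widetilde V^K\subseteq U ,
\]
so $U=V$ and $V$ is algebraically irreducible.
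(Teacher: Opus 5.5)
Your proof is correct and follows the paper's structure. Parts (i), (ii) and both directions of the irreducibility equivalence are argued as in the paper: the averaging projection $P_K$, closedness of the finite-dimensional space $V^K$, and taking the union over $K$. You also fill in a point the paper leaves implicit, namely why $P_K v_n$ actually lies in $V$ (it is a finite average).

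The one place you diverge is the key step of (iii). You approximate $w$ by a smooth vector $v$ and control $P_K(w-v)$ using a Banach--Steinhaus bound on $\sup_{k\in K_0}\|\widetilde\pi(k)\|$. The paper instead estimates directly: $\|P_K w - w\| = \|\int_K (\widetilde\pi(k)w - w)\dd k\| \le \sup_{k\in K}\|\widetilde\pi(k)w - w\|$. This tends to $0$ as $K$ shrinks, by strong continuity of $\widetilde\pi$ at the vector $w$ itself.

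So the ``obstacle'' you identify, that $C_K$ depends on $K$, never actually arises, and no operator-norm bound is needed. Both arguments rest on the same implicit hypothesis, that $\widetilde\pi$ is strongly continuous on the completion. Yours additionally uses completeness of $\widetilde V$ for uniform boundedness, which is available, so your argument is valid, just slightly longer than necessary.
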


\begin{proof}
$(i)$ Since $\pi$ is a smooth admissible representation, $V^K$ is finite-dimensional for all compact open subgroups $K \in \COS(G)$. Now fix a compact open subgroup $K$ of $G$ and suppose that $\widetilde{V}^K \ne V^K$. Let $v \in \widetilde{V}^K \setminus V^K$. By density of $V$ in $\widetilde{V}$, there exists a sequence $(v_n)_{n=1}^\infty \subset V$ such that $v_n \rightarrow v$ as $n \rightarrow \infty$. Then, since the projection 
\begin{displaymath} P^K:\widetilde{V} \rightarrow \widetilde{V}^K, v \mapsto \int_K \widetilde\pi(k)v \dd k \end{displaymath}
is continuous, it follows that $P^K(v_n) \rightarrow P^K(v) = v$ as $n \rightarrow \infty$. But since $v_n \in V$ for each $n$, $P^K(v_n) \in V^K$ for each $n$. So we have found a sequence in $V^K$ converging to $v$ in $\widetilde{V}$. But $V^K$ being finite-dimensional must be closed in $\widetilde{V}$, so $v \in V^K$. 

$(ii)$ By part $(i)$, $\widetilde{V}^K = V^K$ for each $K \in \COS(G)$. Thus $\widetilde{V}^\infty  = \bigcup_{K \in \COS(G)} \widetilde{V}^K = \bigcup_{K \in \COS(G)} V^K = V$.

$(iii)$ Suppose that $W \subseteq \widetilde V$ is a closed $G$-invariant subspace and let $w \in W$ be non-zero. To show that $W$ intersects $V$ non-trivially, it suffices to show that there exists a compact open subgroup $K \in \COS(G)$ such that $P^K(w) \ne 0$. Suppose for a contradiction that this is not the case, that is, for every $K \in \COS(G)$, $P^{K}(w) = 0$.

Let $\mu$ denote the Haar measure on $G$. Let $X:=\{ K \in \COS(G) : \mu(K) \le 1 \}$. Note that $X$ is a directed set when equipped with reverse set inclusion as the ordering. Then, $(P^K(w))_{K \in X}$ is a net in $\widetilde{V}$. We claim that it converges to $w$. Indeed, for any $K \in X$
\begin{align*} 
\norm{P^K(w) - w} &= \norm{\int_K \widetilde{\pi}(k)w \dd k - w}  \\
&\le \sup_{k \in U} \norm{\widetilde{\pi}(k)w - w}.
\end{align*} 

So by continuity of $\widetilde{\pi}$ (at the identity of $G$), for every $\epsilon > 0$, there exists a compact open subgroup $K_0 \in X$ such that 
\begin{displaymath} \norm{P^{K_0}(w)-w} \le \sup_{k \in K_0} \norm{\widetilde{\pi}(k)w -w} < \epsilon. \end{displaymath}
This implies that the net $(P^K(w))_{K \in X}$ converges to $w$. Since $w$ is assumed to be non-zero, there must exist a compact open subgroup $K \in X$ such that $P^K(w) \ne 0$. This implies that $P^K(\widetilde{V}) \subseteq V^K$ intersects $W$ non-trivially. 

To prove the second claim of $(iii)$, suppose that $(\pi,V)$ is algebraically irreducible, and let $W$ be a closed invariant subspace of $\widetilde{V}$. Then by the previous argument, $V \cap W$ is a non-trivial invariant subspace of $V$, so we must have that $V \subseteq W$ since $V$ is algebraically irreducible. This implies that $W=\widetilde{V}$ since $W$ is closed and $V$ is dense in $\widetilde{V}$. So $(\widetilde{\pi},\widetilde{V})$ is topologically irreducible. 

To prove the converse, suppose that $(\widetilde{\pi},\widetilde{V})$ is topologically irreducible. Then, let $W$ be a non-trivial $G$-invariant subspace of $V$. Note that $W$ is smooth and admissible since it is a subrepresentation of a representation satisfying these properties. We need to show that $W=V$. Since the closure of $W$ in $\widetilde{V}$ is a closed non-trivial $G$-invariant subspace of $\widetilde{V}$, it follows that $W$ must be dense in $\widetilde{V}$, since $\widetilde{V}$ is topologically irreducible. Then, for every compact open subgroup $K$ of $G$, the projection $P^K$ is continuous. Thus it follows that $P^K(W) = W^K$ is dense in $P^K(\widetilde{V}) = \widetilde{V}^K = V^K$ for every $K \in \COS(G)$. But $W^K$ is finite-dimensional since $W$ is admissible, so $W^K$ must be closed, hence $W^K=V^K$ for every $K \in \COS(G)$. But since both $W$ and $V$ are smooth, we have that $W = \bigcup_{K \in \COS(G)} W^K = \bigcup_{K \in \COS(G)} V^K = V.$ This implies that $V$ is algebraically irreducible and completes the proof.
\end{proof}

Finally, we have the following result which will be used later in the article.

\begin{prop}\label{prop:unitarizable}
Let $G$ be a tdlc group and $(\pi,V)$ a smooth admissible algebraically irreducible representation of $G$. Let $\langle \cdot, \cdot \rangle: V \times V \rightarrow \mathbb{C}$ be an inner-product on $V$ with respect to which $\pi$ is continuous but not necessarily unitary. Suppose that there exists a $v \in V$ such that the matrix coefficient $\varphi(g) := \langle \pi(g)v,v \rangle$ is positive-definite. Then, the representation $(\pi,V)$ must be unitarizable.
\end{prop}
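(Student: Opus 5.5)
We use the GNS construction for the positive-definite function $\varphi$ and then transport the resulting Hilbert structure back to $V$ via irreducibility. Since $\varphi(g)=\langle \pi(g)v,v\rangle$ is continuous and positive-definite, the GNS construction gives a unitary representation $(\sigma,\H_\varphi)$ of $G$ and a cyclic vector $\xi\in\H_\varphi$ with $\varphi(g)=\langle\sigma(g)\xi,\xi\rangle_{\H_\varphi}$. We may assume $v\neq 0$; otherwise $\varphi\equiv 0$ is trivially positive-definite and nothing can be deduced, so this case should be excluded or is implicit in the statement. The aim is to build a nonzero $G$-equivariant linear map $T:V\to\H_\varphi$. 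Irreducibility of $V$ then makes $T$ injective, and pulling back the inner product of $\H_\varphi$ gives a $G$-invariant inner product on $V$, which is what unitarizability means.

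The map $T$ is defined on the smooth vectors. Since $V$ is algebraically irreducible and $v\ne0$, every $w\in V$ has the form $w=\sum_i c_i\pi(g_i)v$, i.e. $w=\pi(a)v$ for some $a$ in the group algebra $\mathbb{C}[G]$. We set $T(\pi(a)v):=\sigma(a)\xi$. The main obstacle is showing that $T$ is well defined, i.e. that $\pi(a)v=0$ implies $\sigma(a)\xi=0$.

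To prove this, suppose $\pi(a)v=0$. For every $b\in\mathbb{C}[G]$,
\begin{displaymath} \langle \sigma(a)\xi,\sigma(b)\xi\rangle_{\H_\varphi}=\sum_{i,j} c_i\overline{d_j}\,\varphi(h_j^{-1}g_i)=\sum_{i,j}c_i\overline{d_j}\langle\pi(g_i)v,\pi(h_j)^{*}{}^{-1}\ldots\rangle, \end{displaymath}
and this naive identity breaks down, because $\pi$ is not unitary for $\langle\cdot,\cdot\rangle$. We therefore compute directly:
\begin{displaymath} \varphi(h^{-1}g)=\langle\pi(h^{-1})\pi(g)v,v\rangle=\ell_h(\pi(g)v), \end{displaymath}
where $\ell_h(w):=\langle\pi(h)^{-1}w,v\rangle$ is a linear functional on $V$. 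Hence $\langle\sigma(a)\xi,\sigma(b)\xi\rangle=\sum_j\overline{d_j}\,\ell_{h_j}(\pi(a)v)=0$. Since $\sigma(\mathbb{C}[G])\xi$ is dense in $\H_\varphi$, it follows that $\sigma(a)\xi=0$. So $T$ is well defined and linear, it is $G$-equivariant by construction, and $T(v)=\xi\ne0$ because $\|\xi\|^2=\varphi(e)=\langle v,v\rangle>0$.

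It remains to note that $\ker T$ is a $G$-invariant proper subspace of the algebraically irreducible $V$, so $\ker T=0$. Then $(w,w')\mapsto\langle Tw,Tw'\rangle_{\H_\varphi}$ is a $G$-invariant inner product on $V$, and $\pi$ is unitarizable. If the paper's notion of unitarizable also asks that the completion be an irreducible unitary representation, we argue as follows. The image $T(V)$ consists of smooth vectors and is dense, since $\xi\in T(V)$ is cyclic. Its completion therefore contains $V$ as the smooth admissible part by Proposition~\ref{prop:completionrep}, and is topologically irreducible by part (iii) of that proposition. The only delicate point is the well-definedness step above; the rest follows formally from irreducibility.
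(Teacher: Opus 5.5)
Your proof is correct and is essentially the paper's argument: the pulled-back inner product $\langle Tw,Tw'\rangle_{\mathcal{H}_\varphi}$ is exactly the paper's form $B\bigl(\sum_i c_i\pi(g_i)v,\sum_j d_j\pi(h_j)v\bigr)=\sum_{i,j}c_i\overline{d_j}\varphi(h_j^{-1}g_i)$, and your step $\ker T=0$ is the paper's step that the null space of $B$ is a proper $G$-invariant subspace, hence zero. Two things you add are worth keeping. Your identity $\sum_i c_i\varphi(h^{-1}g_i)=\langle\pi(h)^{-1}\sum_i c_i\pi(g_i)v,v\rangle$ proves that $B$ is well defined, i.e.\ independent of how a vector is expanded in terms of $\pi(G)v$, which the paper never checks; and you are right that the hypothesis $v\neq0$ is needed and is only implicit in the statement.
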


\begin{proof}
Since $\text{span}\{\pi(g)v : g \in G \}$ is a $G$-invariant subspace of $V$, and since $(\pi,V)$ is algebraically irreducible, we must have that $V = \text{span}\{\pi(g)v : g \in G \}$. In particular, every element of $V$ can be written as a sum of the form $\sum_{i=1}^n c_i \pi(g_i) v$ with $c_i \in \mathbb{C}$ and $g_i \in G$ for each $i$.

Now define a bilinear form $B:V\times V \rightarrow \mathbb{C}$ by
\begin{displaymath} B\bigg(\sum_{i=1}^n c_i \pi(g_i) v, \sum_{j=1}^m d_j \pi(h_j) v \bigg) := \sum_{i,j} c_i \overline{d_j} \varphi(h_j^{-1}g_i).\end{displaymath} 
One checks easily that $B$ is $G$-invariant. It follows from Proposition \ref{prop:posdefsph} that $B$ is Hermitian and $B$ is positive since $\varphi$ is positive-definite. We now claim that $B$ is definite. Indeed, $\ker(B) := \{ w \in V : B(w,w) = 0\}$ is a proper $G$-invariant subspace of $V$ by \cite[Theorem A.3]{Fol16}, so $\ker(B)$ must be trivial since $(\pi,V)$ is algebraically irreducible. Thus $B$ is a $G$-invariant positive-definite Hermitian form on $V$, in particular a $G$-invariant inner-product, so $(\pi,V)$ is unitarizable.
\end{proof}

\subsection{Contragradient representations and matrix coefficients}

Here we introduce some notation concerning contragradient representations and matrix coefficients.

Let $G$ be a tdlc group and $(\pi,V)$ a smooth representation of $G$. Throughout this section and the remainder of the article, $\widehat{V}$ will be used to denote the (complex) vector space dual of $V$. There is a representation $\widehat{\pi}:G \rightarrow GL(\widehat{V})$ given by 
\begin{displaymath} \widehat{\pi}(g)\widehat{v}(v) = \widehat{v}(\pi(g^{-1})v)  \end{displaymath}
for $\widehat{v} \in \widehat{V}$, $v \in V$ and $g \in G$. 

Corresponding to the representation $(\pi,V)$ is also the complex conjugate representation denoted by $(\overline{\pi},\overline{V})$. Here $\overline{V}$ is the vector space $V$ where we have defined a new scalar multiplication by $c \cdot_{\text{new}} v := \overline{c}\cdot_{\text{old}}v$ for $c \in \mathbb{C}$ and $v \in V = \overline{V}$. The representation $\overline{\pi}$ then acts on $\overline{V}$ by $\overline{\pi}(g)v = \pi(g)v$ for $v \in \overline{V}$.

\begin{dfn}\label{dfn:repcontra}
Let $G$ be a tdlc group and $(\pi,V)$ a smooth representation of $G$. We use the following terminology:
\begin{enumerate}[(i)]
   \item The \hlight{contragradient} representation of $(\pi,V)$, denoted by $(\widetilde{\pi},\widetilde{V})$, is the subrepresentation of smooth vectors in $(\widehat{\pi},\widehat{V})$; \
   \item The \hlight{Hermitian contragradient} representation of $(\pi,V)$, denoted by $(\pi^+,V^+)$, is the complex conjugate representation of $(\widetilde{\pi},\widetilde{V})$.
\end{enumerate}
\end{dfn}

The concept of contragradient representations allows us to formulate a type of matrix coefficient for general smooth representations.

\begin{dfn}\label{dfn:matrixcoef}
Let $G$ be a tdlc group and $(\pi,V)$ a smooth representation of $G$. Given $\widetilde{v} \in \widetilde{V}$ and $v \in V$, a function of the form
\begin{displaymath} \langle \pi(g) v, \widetilde{v} \rangle := \widetilde{v}(\pi(g)v)  \end{displaymath}
will be called a \hlight{matrix coefficient} of the representation $(\pi,V)$. 
\end{dfn}

\subsection{Induced representations}

Let $G$ be a tdlc group and $H$ a closed subgroup of $G$. In this subsection we discuss how to induce a smooth representation $(\sigma,W)$ of $H$ to a smooth representation of $G$. There are multiple types of induction that will be discussed. We refer the reader to \cite[$\S$1.7]{Sil79} for further information.

Fix a smooth representation $(\sigma,W)$ of $H$ for the remainder of this section. Define a space of functions associated to this representation by 
\begin{align*} V^\sigma := \{ f: G \rightarrow W : f(gh) = &\sigma(h^{-1})f(g) \: \forall h \in H, \\
&\exists U \in \COS(G) \text{ s.t. } f(ug) = f(g) \: \forall u\in U \: \forall g \in G \}.
\end{align*}
We may also consider the subspace of compactly supported functions in $V^\sigma$ which we will denote by $V^\sigma_c$. Then, the \hlight{induced representation} (resp.\ \hlight{compactly induced representation}) of $\sigma$ to $G$ is denoted by $\ind_H^G(\sigma)$ (resp.\ $\cind_H^G(\sigma)$) and acts on $V^\sigma$ (resp.\ $V^\sigma_c$) by the left regular representation of $G$. 

One notes that the representations $\ind_H^G(\sigma)$ and $\cind_H^G(\sigma)$ are isomorphic if $H$ is cocompact in $G$ i.e.\ the coset space $G/H$ is compact.

\subsection{Boundary representations and the Iwasawa decomposition}\label{sec:bdrep}

In this subsection we recall some definitions and facts about boundary representations on groups with a Gelfand pair. We also note down some results and discussion from the following paper of Nicolas Monod \cite{Mon20}. 

We summarise in the following statement the main results of the paper \cite{Mon20}.

\begin{thm}\label{thm:monod}\cite{Mon20}
Let $G$ be a locally compact group and $K \le G$ a compact subgroup such that $(G,K)$ is a Gelfand pair. Then, there exists a cocompact amenable subgroup $P \le G$ such that $G$ admits an Iwasawa decomposition $G=KP$. Furthermore, if $P$ is chosen to be maximal, then we have homeomorphisms $\partial G \cong G/P \cong K/K \cap P$, where $\partial G$ denotes the Furstenberg boundary of $G$. 
\end{thm}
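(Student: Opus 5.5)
Since Theorem \ref{thm:monod} only summarises \cite{Mon20}, in the paper it is quoted rather than reproved. If I were to reprove it, the plan would be to work directly on the Furstenberg boundary $X := \partial G$. This is the universal compact $G$-space that is minimal and strongly proximal. I would then show that $K$ acts transitively on $X$. Setting $P := G_x$ for a point $x \in X$ then gives:
\begin{itemize}
\item $G = KP$, and the orbit maps give $G/P \cong K/(K\cap P) \cong X$ (continuous bijections from compact spaces onto Hausdorff spaces);
\item $P$ is cocompact, because $G/P \cong X$ is compact;
\item $P$ is amenable, by the standard fact that point stabilisers of the Furstenberg boundary are amenable: $\partial G$ is a quotient of every compact minimal strongly proximal $G$-space, and $G/G_x$ receives a $G$-map from the space of probability measures on a boundary carried by an amenable subgroup.
\end{itemize}
For maximality, any amenable cocompact $Q \supseteq P$ fixes a point of $X$ by amenability. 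Cocompactness together with the universal property of $\partial G$ then forces $Q$ to be a point stabiliser, and hence $Q = P$.

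The heart of the argument is transitivity, and this is where the Gelfand hypothesis has to enter. For $x \in X$, let $\nu_x := \int_K \delta_{kx}\, \dd k$ be the $K$-average of $\delta_x$. For $g \in G$, let $m_g := m_K * \delta_g * m_K$ be the $K$-bi-invariant probability measure attached to $g$. Commutativity of $L^1(K\backslash G/K)$ extends, by a weak-$*$ approximation argument, to commutativity of the algebra of $K$-bi-invariant probability measures, so $m_g * m_h = m_h * m_g$.

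The first step is to use this to show that
\begin{displaymath}
m_g * \nu_x = \int_K \nu_{gkx}\, \dd k .
\end{displaymath}
The second step uses strong proximality: choose a net $(g_\alpha)$ with $g_\alpha \nu_x \to \delta_y$. Averaging over $K$ on the left then gives $m_{g_\alpha} * \nu_x \to \nu_y$. The third step plays commutativity against this convergence. Writing $\nu_x = m_K * \delta_x$, we have
\begin{displaymath}
m_h * m_{g_\alpha} * \delta_x = m_{g_\alpha} * m_h * \delta_x ,
\end{displaymath}
and passing to the limit shows that $m_h * \nu_y$ is again of the form $\nu_{y'}$. From this I aim to deduce that the closed set $\operatorname{supp}\nu_y = Ky$ is $G$-invariant. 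Minimality of $X$ then gives $Ky = X$.

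The step I expect to be the main obstacle is the third: turning commutativity of the Hecke algebra into the statement that $K$-orbits are $G$-invariant. The difficulty is that weak-$*$ limits of $K$-averaged measures need not be $K$-averages of single points. To handle this, I would first try extreme-point arguments: $\nu_y$ is extreme among $K$-invariant probability measures, and by strong proximality the limits should again be extreme. I would also use continuity of the convolution action $M(G) \times M(X) \to M(X)$ on probability measures.
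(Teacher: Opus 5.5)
The paper gives no proof of this statement; it is quoted from \cite{Mon20}, so your reconstruction has to stand on its own. The architecture is sound: show that $K$ is transitive on $X=\partial G$, take $P=G_x$, and get transitivity from commuting bi-$K$-invariant measures together with strong proximality and minimality. The central step, however, has a genuine gap, and it is more than the technical difficulty you flag.

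Your net $(g_\alpha)$ is chosen only so that $g_\alpha\nu_x\to\delta_y$. Nothing controls $g_\alpha(m_h*\nu_x)$, a measure carried by $KhKx$, which a priori is a different set from $Kx$. So the right-hand side $m_{g_\alpha}*m_h*\nu_x=m_K*g_\alpha(m_h*\nu_x)$ only converges, along a subnet, to $m_K*\lambda$ for some uncontrolled $\lambda\in\mathrm{Prob}(X)$, and nothing makes $m_h*\nu_y$ an orbit measure. The target is also wrong. Even $m_h*\nu_y=\nu_{y'}$ would only give $hKy\subseteq Ky'$ on comparing supports, which does not make $Ky$ invariant. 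What you need is $m_h*\nu_y=\nu_y$, with a single $y$ serving every $h\in G$. Extreme-point considerations do not produce this.

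The missing idea is to choose the net for all relevant measures at once. Given $h_1,\dots,h_n\in G$, apply strong proximality to $\mu=\frac{1}{n+1}\bigl(\nu_x+\sum_i m_{h_i}*\nu_x\bigr)$. If $g_\alpha\mu\to\delta_y$, then, since $\delta_y$ is an extreme point of $\mathrm{Prob}(X)$, each summand also converges: $g_\alpha\nu_x\to\delta_y$ and $g_\alpha(m_{h_i}*\nu_x)\to\delta_y$ (pass to convergent subnets). Commutativity then gives
\[ m_{h_i}*\nu_y=\lim_\alpha m_{h_i}*m_{g_\alpha}*\nu_x=\lim_\alpha m_{g_\alpha}*m_{h_i}*\nu_x=\lim_\alpha m_K*g_\alpha(m_{h_i}*\nu_x)=\nu_y . \]
The sets $F_h=\{y\in X: m_h*\nu_y=\nu_y\}$ are closed, because $y\mapsto\nu_y$ is continuous, and they have the finite intersection property. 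By compactness some $y$ lies in all of them. Then $KhKy=\operatorname{supp}(m_h*\nu_y)=Ky$ for every $h$, so $Ky$ is closed and $G$-invariant, and minimality gives $Ky=X$.

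An equivalent route: the maps $\nu\mapsto m_h*\nu$ are commuting continuous affine self-maps of $\mathrm{Prob}(X)^K$, so Markov--Kakutani gives a common fixed point $\nu$. Its support is $G$-invariant, hence equal to $X$. For a net with $g_\alpha\nu\to\delta_y$ one gets $\nu=m_{g_\alpha}*\nu=m_K*g_\alpha\nu\to\nu_y$, so $X=Ky$.

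Three smaller points also need repair.
\begin{enumerate}[(i)]
\item Your justification of amenability of $G_x$ is reversed. $\partial G$ maps onto every minimal strongly proximal space, not conversely. The standard argument maps $\partial G$ into $\mathrm{Prob}(Z)$ for a suitable compact $G$-space $Z$ and evaluates at $x$.
\item An amenable $Q$ only preserves some probability measure $\eta$ on $X$, not a point. To make $\eta$ a Dirac mass you need cocompactness ($G\eta$ is compact, being the image of $G/Q$) together with strong proximality.
\item For the ``furthermore'' clause you must show that every maximal cocompact amenable subgroup is a point stabiliser. The same fixed-point argument gives this; showing that $G_x$ is maximal does not.
\end{enumerate}
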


Suppose that we have a Gelfand pair $(G,K)$ where $G$ is non-amenable and choose $P$ a maximal cocompact amenable subgroup of $G$ such that $G=KP$. Let $\mu$ denote the measure on $G/P$ obtained by pushing forward the normalised Haar measure on $K$ via the homeomorphism $G/P \cong K/K\cap P$. The measure $\mu$ on $G/P$ is $K$-invariant and the $G$-action preserves the measure class of $\mu$. However, $\mu$ is not $G$-invariant.

Now let $\Delta_P$ denote the modular function on $P$. This modular function is always non-trivial as a consequence of assuming that $G$ is non-amenable (see, for example, \cite[Theorem 2.51]{Fol16} for more details). Define a function $\rho$ on $G$ by $\rho(kp) := \Delta_P(p)$ for $k \in K$ and $p \in P$. Then, by the results of \cite[$\S$2.6]{Fol16}, $\rho$ is a continuous function on $G$ and we have that 

\begin{displaymath}  \frac{\dd g\mu}{\dd \mu}(xP) = \frac{\rho(g^{-1}x)}{\rho(x)}. \end{displaymath}
One should note that the function defined by 
\begin{displaymath} c: G \times G/P \rightarrow \mathbb{R}_{> 0}, (g,xP) \mapsto \frac{\dd g\mu}{\dd \mu}(xP) = \frac{\rho(g^{-1}x)}{\rho(x)}\end{displaymath}
is well known to satisfy the cocycle identity. 


We now define the following boundary representations, maintaining the notation as given above. In the following definition, given $p \in [1,\infty)$, we denote by $\B(L^p(G/P))$ the space of bounded operators on the Banach space $L^p(G/P)$.

\begin{dfn}
Given $p \in [1,\infty)$ and $z \in \mathbb{C}$, we define a representation $\pi_{z,p}: G \rightarrow \B(L^p(G/P))$ whose action on $f \in L^p(G/P)$ is given by 
\begin{displaymath} \pi_{z,p}(g)f(xP) := \bigg(\frac{\dd g\mu}{\dd \mu}(xP)\bigg)^z f(g^{-1}xP) \end{displaymath}
for $g,x \in G$. In the case when $p=2$, we will denote the representation by $\pi_z$ instead of $\pi_{z,2}$.
\end{dfn}

From now on, any representation of the form of a representation in the above definition will be referred to as a \hlight{boundary representation} on the Gelfand pair $(G,K)$. We remark that for a general complex number $z$, a boundary representation need not be unitary nor irreducible, even in the case when $p=2$.

We now show that these boundary representations are continuous.

\begin{prop}
For every $p \in [1,\infty)$ and $z \in \mathbb{C}$, the representation $(\pi_{z,p}, L^p(G/P))$ is (strongly) continuous i.e. for all $f \in L^p(G/P)$, the map 
\begin{displaymath} G \rightarrow L^p(G/P), g \mapsto \pi_{z,p}(g) f \end{displaymath} 
is continuous with respect to the group topology on $G$ and the norm topology on $L^p(G/P)$. 
\end{prop}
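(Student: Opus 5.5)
The plan is to prove strong continuity first on a dense subspace where everything is uniformly controlled, and then extend to all of $L^p(G/P)$ using local uniform bounds on the operator norms. Write $\pi_{z,p}(g)f(xP) = c(g,xP)^z f(g^{-1}xP)$, where $c(g,xP) = \rho(g^{-1}x)/\rho(x)$.

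The first step is to establish the local norm bound. Since $\rho(kp)=\Delta_P(p)$, the function $\rho$ is continuous and strictly positive on $G$. Moreover, for $k\in K$ we have $\rho(kx)=\rho(x)$. Because $G/P\cong K/K\cap P$, every coset has a representative $x\in K$, so we may write
\begin{displaymath} c(g,xP)=\rho(g^{-1}k) \end{displaymath}
for $xP = kP$ with $k \in K$. This expression is well defined independently of the representative $k$.

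It follows that for $g$ ranging over a compact neighbourhood $U$ of a fixed $g_0$, the quantity $c(g,xP)$ is bounded above and below by positive constants. This holds uniformly in $xP$, because $U^{-1}K$ is compact and $\rho$ is continuous and positive on it. Hence $|c(g,xP)^z| = c(g,xP)^{\Re z}$ is also uniformly bounded above and below.

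Next I handle the change of variables. The Radon–Nikodym derivative of translation by $g^{-1}$ is again a value of the cocycle $c$, so it is bounded on $U$ as well. Consequently
\begin{displaymath} \sup_{g\in U}\norm{\pi_{z,p}(g)}_{\B(L^p(G/P))}<\infty, \end{displaymath}
and each operator $\pi_{z,p}(g)$ is well defined and bounded. The homomorphism property follows from the cocycle identity.

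The second step is continuity on $C(G/P)$. Take $f\in C(G/P)$. The map
\begin{displaymath} (g,xP)\mapsto c(g,xP)^z f(g^{-1}xP) \end{displaymath}
is jointly continuous on $G\times G/P$, since $c(g,kP)=\rho(g^{-1}k)$ and $G\to G/P$ is open. Because $G/P$ is compact, joint continuity gives uniform continuity in $xP$ as $g\to g_0$, so $\pi_{z,p}(g)f\to\pi_{z,p}(g_0)f$ uniformly on $G/P$. Since $\mu$ is a finite measure, uniform convergence implies convergence in $L^p$.

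The third step is the density argument. The space $C(G/P)$ is dense in $L^p(G/P)$ for $p<\infty$, since $\mu$ is a finite Radon measure on a compact space. Given $f\in L^p$ and $\epsilon>0$, choose $h\in C(G/P)$ with $\norm{f-h}_p<\epsilon$. Let $M$ denote the uniform bound on $\norm{\pi_{z,p}(g)}$ over $U$ from the first step. Then
\begin{displaymath} \norm{\pi_{z,p}(g)f-\pi_{z,p}(g_0)f}_p\le 2M\epsilon+\norm{\pi_{z,p}(g)h-\pi_{z,p}(g_0)h}_p, \end{displaymath}
and the last term tends to $0$ as $g\to g_0$ by the second step. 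This gives strong continuity.

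I expect the main obstacle to be the first step. It requires justifying the well-definedness and continuity of $c$ on $G\times G/P$ via the $K$-representative description, and confirming that the Jacobian of $x\mapsto g^{-1}x$ with respect to $\mu$ is given by the same cocycle. The latter is exactly the Radon–Nikodym formula recalled above, so I would take it from \cite[$\S$2.6]{Fol16}.
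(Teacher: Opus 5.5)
Your proposal is correct and follows essentially the same route as the paper: a uniform bound on $\norm{\pi_{z,p}(g)}$ for $g$ near a point, continuity on the dense subspace $C(G/P)$ via uniform convergence on the compact space $G/P$, and then a density argument. The differences are cosmetic. You work at an arbitrary $g_0$ rather than reducing to continuity at the identity. You also obtain two-sided bounds on the cocycle from compactness of $U^{-1}K$, which makes explicit the lower bound that the paper's estimate (written as $M^z$) tacitly needs when $\Re(z)<0$.
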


\begin{proof}
Throughout the proof we fix $p \in [1,\infty)$ and $z \in \mathbb{C}$.

Let $U$ be a symmetric compact neighbourhood of the identity in $G$. We first claim that the map $g \mapsto \norm{\pi_{z,p}(g)}_{\B(L^p(G/P))}$ is bounded on $U$. Indeed, since the function $\rho$ is continuous, and $U \times G/P$ is a compact subset of $G \times G/P$, the map $(g,xP) \mapsto\frac{\dd g\mu}{\dd \mu}(xP) = \frac{\rho(g^{-1}x)}{\rho(x)}$ is bounded by some constant $M >0$ on $U \times G/P$. Then, we have that, for any $f \in L^p(G/P)$ and $g \in U$
\begin{align*} \norm{\pi_{z,p}(g)f}_{L^p(G/P)} &= \bigg( \int_{G/P} \bigg\lvert  \bigg(\frac{\dd g\mu}{\dd \mu}(xP)\bigg)^z f(g^{-1}xP) \bigg\rvert^p \dd \mu(xP) \bigg)^{1/p} \\
& \le M^z \bigg( \int_{G/P} \bigg\lvert  f(g^{-1}xP) \bigg\rvert^p \dd \mu(xP) \bigg)^{1/p} \\
& = M^z \bigg( \int_{G/P} \bigg\lvert  f(xP) \bigg\rvert^p \frac{\dd g^{-1} \mu}{\dd \mu}(xP)\dd \mu(xP) \bigg)^{1/p} \\
& \le M^{\frac{pz+1}{p}} \bigg( \int_{G/P} \bigg\lvert  f(xP) \bigg\rvert^p \dd \mu(xP) \bigg)^{1/p} \\
& = M^{\frac{pz+1}{p}}  \norm{f}_{L^p(G/P)}.
\end{align*} 

Thus, by definition of the operator norm, we have that $\norm{\pi_{z,p}(g)}_{\B(L^p(G/P))} \le M^{\frac{pz+1}{p}}$ for all $g \in U$. 

Now, to prove the proposition, it suffices to show that $\pi_{z,p}$ is continuous at the identity. In particular, we need to show that, for every $f \in L^p(G/P)$, and every $\epsilon > 0$, there exists a neighbourhood $W$ of the identity in $G$ such that 
\begin{displaymath} \norm{\pi_{z,p}(g)f - f}_{L^p(G/P)} \le \epsilon  \end{displaymath}
for all $g \in W$. So fix $f \in L^p(G/P)$ and $\epsilon > 0$ for the remainder of the proof. We will now find such a neighbourhood $W$.

Since $G/P$ is compact, the continuous functions on $G/P$ are dense in $L^p(G/P)$. So we may find a continuous function $f'$ on $G/P$ such that 
\begin{displaymath} \norm{f' -f}_{L^p(G/P)} \le \frac{\epsilon}{2(M^{\frac{pz+1}{p}} +1)}. \end{displaymath}
Also, by continuity of $\rho$ and $f'$, we may find neighbourhoods $U_1$ and $U_2$ of the identity in $G$ such that 
\begin{displaymath} \bigg\lvert\bigg(\frac{\rho(g^{-1}x)}{\rho(x)}\bigg)^zf'(g^{-1}xP) -f'(g^{-1}xP) \bigg\rvert \le \frac{\epsilon}{4}. \end{displaymath}
for all $(g,xP) \in U_1 \times G/P$ and
\begin{displaymath} \lvert f'(g^{-1}xP) -f'(xP) \rvert \le \frac{\epsilon}{4} \end{displaymath}
for all $(g,xP) \in U_2 \times G/P$. Now let $\lambda_G$ denote the left regular representation of $G$. Then, for all $g \in U_1 \cap U_2$, we have that 
\begin{align*}
\norm{\pi_{z,p}(g)f' - f'}_{L^p(G/P)} &= \norm{\pi_{z,p}(g)f' -\lambda_G(g)f' + \lambda_G(g)f'- f'}_{L^p(G/P)} \\
& \le \norm{\pi_{z,p}(g)f' -\lambda_G(g)f'}_{L^p(G/P)} + \norm{\lambda_G(g)f'- f'}_{L^p(G/P)} \\
&= \bigg( \int_G \bigg\lvert \bigg(\frac{\rho(g^{-1}x)}{\rho(x)}\bigg)^z f'(g^{-1}xP) - f'(g^{-1}xP) \bigg\rvert^p \dd \mu(xP) \bigg)^{1/p} \\
&\hspace{1.5cm}+ \bigg( \int_G \bigg\lvert f'(g^{-1}xP) - f'(xP) \bigg\rvert^p \dd \mu(xP) \bigg)^{1/p} \\
& \le \bigg( \frac{\epsilon^p}{4^{p}} \bigg)^{1/p} +\bigg( \frac{\epsilon^p}{4^{p}} \bigg)^{1/p} = \frac{\epsilon}{4} + \frac{\epsilon}{4}  = \frac{\epsilon}{2}.
\end{align*}
Finally, setting $W := U \cap U_1 \cap U_2$, we have that, for all $g \in W$, 
\begin{align*} &\norm{\pi_{z,p}(g) f - f}_{L^p(G/P)} \\
&\le \norm{\pi_{z,p}(g) f - \pi_{z,p}(g)f'}_{L^p(G/P)} + \norm{\pi_{z,p}(g) f' - f'}_{L^p(G/P)} + \norm{f' - f}_{L^p(G/P)} \\
& \le (\norm{\pi_{z,p}(g)}_{\B(L^p(G/P))} + 1) \norm{f - f'}_{L^p(G/P)} + \norm{\pi_{z,p}(g) f' - f'}_{L^p(G/P)} \\
& \le (M^{\frac{pz+1}{p}}  +1) \frac{\epsilon}{2(M^{\frac{pz+1}{p}} +1)}+\frac{\epsilon}{2} = \epsilon.
\end{align*}
This completes the proof.
\end{proof}

The point of the remainder of this subsection is to show that if $G$ is a tdlc group then the subrepresentation of smooth vectors in a boundary representation is naturally isomorphic to parabolically inducing a certain character from $P$. So for the remainder of this subsection, we assume that $G$ is a tdlc group. 

Now let $z \in \mathbb{C}$ and consider the representation $\sigma_z := \ind_P^G(\Delta_P^{-z})$, where the induction is as defined in the last section. Since $P$ is cocompact in $G$ by definition, we also have that $\sigma_z \cong \cind_P^G(\Delta_P^{-z})$. 

Let $V_z$ denote the representation space of the representation $\sigma_z$. By definition of $V_z$ in the last section, we may restrict functions in $V_z$ to $K$. One checks that restricting functions in $V_z$ to $K$ defines a map
\begin{displaymath} R_K : V_z \rightarrow C^\infty(K) \end{displaymath}
where $C^\infty(K)$ denotes the space of locally constant functions on $K$. The image of this map is precisely those functions in $C^\infty(K)$ who are right translation invariant by $K \cap P$. Thus, $R_K$ may in fact be seen as a map of the form
\begin{displaymath} R_K : V_z \rightarrow C^\infty(K/K \cap P) \end{displaymath}
where we view $C^\infty(K/K \cap P)$ as a subspace of $C^\infty(K)$. Since our group $G$ admits an Iwasawa decomposition $G=KP$, one can check using this that $R_K$ is a bijection and hence identifies $V_z$ with $C^\infty(K/K \cap P)$. Consequently, the isomorphism type of the representation space $V_z$ does not depend on the complex parameter $z$.

Define a norm on $V_z$ by
\begin{displaymath} \norm{f}_{K,p} := \bigg( \int_K \lvert R_K(f)(k) \rvert^p \dd k \bigg)^{1/p} \end{displaymath}
for $f \in V_z$. The completion of $V_z$ with respect to this norm is a Banach space which we denote by $E_{z,p}$. The map $R_K$ extends by continuity to $E_{z,p}$ and gives an isomorphism of $E_{z,p}$ with $L^p(K/K \cap P)$ i.e.\ a linear isometric isomorphism
\begin{displaymath} R_K: E_{z,p} \rightarrow L^p(K/K \cap P). \end{displaymath}
Also, since $\sigma_z$ acts on $V_z$ via the left regular representation, it is continuous with respect to the norm $\norm{\:\cdot\:}_{K,p}$. Thus we may extend the representation $\sigma_z$ to $E_{z,p}$ by continuity, and the resulting representation of $G$ will be denoted by $\sigma_{z,p}$.

We define the following map which we shall use in the following proposition:

\begin{displaymath}\label{equ:bdhomeo} \varphi: K/K\cap P \rightarrow G/P, k(K\cap P) \mapsto kP. \end{displaymath}

The map $\varphi$ is a homeomorphism. Then, maintaining the notation as laid out in this subsection, we have the following. 

\begin{prop}\label{prop:bdrep1}
For every $p \in [1, \infty)$ and $z \in \mathbb{C}$, the linear map 
\begin{displaymath} U_{z,p}: E_{z,p} \rightarrow L^p(G/P), f \mapsto R_K(f)\circ \varphi^{-1} \end{displaymath}
is an isometric isomorphism of Banach spaces. Furthermore, it intertwines the representations $\pi_{z,p}$ and $\sigma_{z,p}$ i.e.\
\begin{displaymath} U_{z,p}\sigma_{z,p}(g) = \pi_{z,p}(g)U_{z,p}  \end{displaymath}
for all $g \in G$. Thus $\sigma_{z,p} \cong \pi_{z,p}$.
\end{prop}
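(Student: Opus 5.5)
The proof has three parts: show $U_{z,p}$ is an isometric isomorphism, prove the intertwining relation on the dense subspace $V_z$, and extend by continuity.

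\emph{Isometry.} By construction, $R_K: E_{z,p}\to L^p(K/K\cap P)$ is an isometric isomorphism, where $K/K\cap P$ carries the pushforward of the normalised Haar measure of $K$. Next, $\varphi: K/K\cap P\to G/P$ is a homeomorphism, and by definition $\mu$ is the pushforward of that same measure under $\varphi$. Hence composition with $\varphi^{-1}$ is an isometric isomorphism $L^p(K/K\cap P)\to L^p(G/P)$, and $U_{z,p}$ is a composition of two such maps.

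\emph{Intertwining on $V_z$.} Since $V_z$ is dense in $E_{z,p}$ and both $\sigma_{z,p}(g)$ and $\pi_{z,p}(g)$ are bounded operators, it suffices to verify $U_{z,p}\sigma_z(g)f=\pi_{z,p}(g)U_{z,p}f$ for $f\in V_z$. Here I take $\Delta_P^{-z}$ as the inducing character, so that $f(xp)=\Delta_P(p)^{z}f(x)$. This sign convention should be fixed so that the final exponents match.

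Now fix $g\in G$ and $k\in K$, and use the Iwasawa decomposition to write $g^{-1}k=k'p$ with $k'\in K$, $p\in P$. Then
\[(\sigma_z(g)f)(k)=f(g^{-1}k)=f(k'p)=\Delta_P(p)^z f(k')=\rho(g^{-1}k)^z f(k').\]
On the other side, $U_{z,p}f(kP)=f(k)$, and $g^{-1}kP=k'P$. Using $\rho(k)=1$, the right-hand side at $kP$ is
\[\Big(\frac{\rho(g^{-1}k)}{\rho(k)}\Big)^z f(k')=\rho(g^{-1}k)^z f(k').\]
The two expressions agree, so the identity holds on $V_z$.

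\emph{Main obstacle.} The delicate point is consistency of conventions. The orientation of $\rho$ and of the Radon–Nikodym cocycle $\frac{\rho(g^{-1}x)}{\rho(x)}$ must match the power of $\Delta_P$ used in the induction, including left versus right modular function conventions. One must also check that $\rho$ is well defined on $K\cap P$, which holds because $\Delta_P$ is trivial on the compact group $K\cap P$.

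Having checked these conventions, both sides of the intertwining relation are continuous in $f$, so density of $V_z$ extends the identity to all of $E_{z,p}$. This gives $\sigma_{z,p}\cong\pi_{z,p}$.
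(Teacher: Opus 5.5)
Your proof is correct and follows the same route as the paper: the isometry comes from $\mu$ being the pushforward of Haar measure on $K$, the intertwining identity is checked on the dense subspace $V_z$ via the Iwasawa decomposition $g^{-1}k=k'p$ and $f(k'p)=\Delta_P(p)^z f(k')$, and it then extends by continuity. The only cosmetic difference is that you evaluate at a $K$-representative $kP$ using $\rho|_K=1$, whereas the paper works with an arbitrary $x=kp$ and verifies $\rho(g^{-1}k)=\rho(g^{-1}x)/\rho(x)$. The two are equivalent because the cocycle is well defined on $G/P$ and every coset has a representative in $K$.
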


\begin{proof}
One checks easily from the definitions that $U_{z,p}$ is an isometric isomorphism. We now show that $U_{z,p}$ intertwines the representations $\pi_{z,p}$ and $\sigma_{z,p}$. It suffices to show, by density of $V_z$ in $E_{z,p}$, that given $f \in V_z$,
\begin{displaymath} U_{z,p}(\sigma_{z,p}(g)(f)) = \pi_{z,p}(g)(U_{z,p}(f)) \end{displaymath}
for all $g \in G$.
Let $x \in G$ and write $x = kp$ for some $k \in K$ and $p \in P$. Note that $U_{z,p}(f)(xP) = f(k)$. Given $g \in G$, write $g^{-1}k = ab$ for $a \in K$ and $b \in P$. Then, $g^{-1}x = g^{-1}kp = abp$. Since $bp \in P$, we have that
\begin{displaymath} \rho(g^{-1}x) = \Delta_P(bp) = \Delta_P(b) \Delta_P(p) = \rho(g^{-1}k)\rho(x) \end{displaymath}
from which it follows that $\rho(g^{-1}k) = \frac{\rho(g^{-1}x)}{\rho(x)}$. We then compute that
\begin{align*}
U_{z,p}(\sigma_{z,p}&(g)(f))(xP) = (\sigma_{z,p}(g)(f))(k) = f(g^{-1}k) = f(ab)  \\
&= \Delta_P^{-z}(b^{-1})f(a) = \Delta_P^{z}(b)f(a) =\rho(g^{-1}k)^zf(a) = \bigg(\frac{\rho(g^{-1}x)}{\rho(x)}\bigg)^zf(a).
\end{align*}

Similarly, we compute that
\begin{align*}
(\pi_z(g)&(U_{z,p}(f)))(xP) = \bigg(\frac{\rho(g^{-1}x)}{\rho(x)}\bigg)^z U_{z,p}(f)(g^{-1}xP) \\
&= \bigg(\frac{\rho(g^{-1}x)}{\rho(x)}\bigg)^z U_{z,p}(f)(g^{-1}kP) = \bigg(\frac{\rho(g^{-1}x)}{\rho(x)}\bigg)^z f(a).
\end{align*}

Thus it follows that $U_{z,p}\sigma_{z,p}(g) = \pi_{z,p}(g)U_{z,p}$ on the dense subspace $V_z$ for all $g \in G$. By continuity, it follows that $U_{z,p}$ intertwines $\sigma_{z,p}$ and $\pi_{z,p}$. This completes the proof. 
\end{proof}

\subsection{A theorem of Godement}\label{sec:Godement}

In this section we discuss a result from \cite{God52} that we will use in the proof of Theorem A. Some of the notation and terminology in \cite{God52} is dated and requires some effort to understand, so we do this to clarify the result from Godement we use and to clarify our proof of Theorem A. 

Let $G$ be a locally compact group and $K \le G$ a compact subgroup. Given $\sigma \in \widehat{K}$, we write $p_\sigma := \dim(\sigma) \, \overline{\chi_\sigma}$, where $\chi_\sigma := {\rm Tr}(\sigma)$ is the character of $\sigma$. Note that $p_\sigma \in L^1(K)$ is a self-adjoint idempotent. 

Let $\pi$ be a topologically irreducible representation of $G$ on a Banach space $E$ and define
\begin{displaymath} P_\sigma := \pi(p_\sigma) = \int_K \pi(k)p_\sigma(k) \dd k. \end{displaymath}
The operator $P_\sigma$ is a projection of $E$ onto a closed subspace that we denote by $E(\sigma)$. The subspace $E(\sigma)$ is precisely the $\sigma$-isotypic subspace of the representation $\pi|_K$.

Godement proves the following fact in his article (see \cite[$\S$8]{God52}).

\begin{prop}\cite[$\S$8]{God52}\label{GodementThm}
Let $G$ be a locally compact group and $K$ a compact subgroup of $G$. Let $\pi$ be a topologically irreducible uniformly bounded representation of $G$ on a Banach space $E$. Suppose that for every $\sigma \in \widehat{K}$ we have $\dim(E(\sigma)) < \infty.$ Then $\ker_{L^1(G)}(\pi)$ is maximal amongst the collection of closed ideals of $L^1(G)$. 
\end{prop}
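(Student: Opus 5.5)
We sketch the argument behind Godement's result as it applies here. The plan is to show that if $I$ is a closed ideal of $L^1(G)$ with $\ker_{L^1(G)}(\pi) \subseteq I$, then either $I = \ker_{L^1(G)}(\pi)$ or $I = L^1(G)$.

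First, reduce to $K$-isotypic pieces. Since $\pi$ is uniformly bounded, $\norm{\pi(f)} \le C\norm{f}_1$, so $\pi$ extends to a bounded algebra homomorphism $L^1(G) \to \B(E)$. Put $J := \ker_{L^1(G)}(\pi)$. For each $\sigma \in \widehat{K}$, view $p_\sigma$ as a measure on $G$ supported on $K$ and form the corner algebra
\begin{displaymath} L^1_\sigma := p_\sigma * L^1(G) * p_\sigma, \end{displaymath}
which is a closed subalgebra of $L^1(G)$. Then $\pi$ maps $L^1_\sigma$ into $\B(E(\sigma))$, and $E(\sigma)$ is finite-dimensional by hypothesis. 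Topological irreducibility of $\pi$ on $E$, combined with Burnside's theorem on the finite-dimensional space $E(\sigma)$, should give that $\pi(L^1_\sigma)$ equals all of $\B(E(\sigma))$. Indeed, any $\pi(L^1_\sigma)$-invariant subspace $F \subseteq E(\sigma)$ generates a closed $G$-invariant subspace $\overline{\pi(L^1(G))F}$ whose $\sigma$-component is $\overline{P_\sigma\pi(L^1(G))P_\sigma F} = F$, so $F$ must be $0$ or $E(\sigma)$.

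Next, handle the ideal $I$. Since $I \supsetneq J$ by assumption, pick $f \in I \setminus J$. Then $\pi(f) \ne 0$, so there exist $\sigma, \tau \in \widehat{K}$ with $P_\tau \pi(f) P_\sigma \ne 0$. This uses that the algebraic sum $\bigoplus_\sigma E(\sigma)$ is dense in $E$ (Peter--Weyl, i.e. approximation by $K$-finite vectors), applied in $E$ and, through the dual pairing, to detect a nonzero component. Multiplying $f$ on both sides by suitable elements of $L^1(G)$ lands in $I \cap L^1_\sigma$ with nonzero image; by the density and irreducibility already established, this image is a nonzero two-sided ideal of the full matrix algebra $\B(E(\sigma))$, hence everything. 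Choose $e \in I \cap L^1_\sigma$ with $\pi(e) = \mathrm{id}_{E(\sigma)}$. The key claim is then that for every $g \in L^1(G)$ we can write $g \in I + J = I$. My first attempt is to show that the closed ideal generated by $e$ contains every $h$ whose image under $\pi$ vanishes, together with enough elements to reach any prescribed image. One way to organise this is to consider the quotient map $L^1(G) \to L^1(G)/I$ and argue that $\pi$ factors through it, so that $I$ is contained in the primitive-type ideal determined by the irreducible module $E$.

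This last step is the main obstacle. It is not clear how to show that a closed ideal strictly containing $J$ must be all of $L^1(G)$, rather than merely having nonzero image under $\pi$. A cleaner route is to choose a maximal ideal $M \supseteq I$ (possible in the unital setting after adjoining a unit, using that $L^1(G)$ has approximate units) and show $M = J$ whenever $J \subseteq M \ne L^1(G)$. To do this, note that $L^1(G)/M$ is a simple Banach algebra whose corners at $p_\sigma$ are finite-dimensional quotients of $\pi(L^1_\sigma) = \B(E(\sigma))$; matrix algebras are simple, so each such corner is either $0$ or all of $\B(E(\sigma))$. Comparing the two sides then forces $\ker(L^1(G) \to L^1(G)/M) = J$. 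Making the passage from corners to the whole algebra rigorous, via approximate identities built from sums of $p_\sigma$, is where I expect the real work to lie.
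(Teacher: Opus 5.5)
You have the right ingredients: the corners $L^1_\sigma=p_\sigma*L^1(G)*p_\sigma$, Burnside's theorem on the finite-dimensional $E(\sigma)$, and simplicity of $\mathrm{End}(E(\sigma))$. But, as you acknowledge, the proof stops exactly at the step that carries the content. That step is getting from ``$I$ contains some $e\in L^1_\sigma$ with $\pi(e)=\mathrm{id}_{E(\sigma)}$'' to $I=L^1(G)$, and neither of your proposed routes closes it.

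\emph{First route.} Factoring $\pi$ through $L^1(G)/I$ would require $I\subseteq J$, which is the opposite of what you have.

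\emph{Second route.} The maximal-ideal detour fails at the start. Zorn's lemma in the unitization $L^1(G)\oplus\mathbb{C}$ produces a maximal ideal containing $I$, but $L^1(G)$ itself is one. So you are not guaranteed any proper ideal $M\subsetneq L^1(G)$ with $I\subseteq M$. A bounded approximate identity does not make $I$ modular; this is exactly the maximal versus maximal-closed issue in the remark after the proposition. Even granting $M$, your ``comparing the two sides'' is the same unproved passage from corners to the whole algebra.

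The paper gives no proof of its own here, only the citation to Godement. The missing argument is short, however, and works directly with $I$.

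(1) Since $J\cap L^1_\sigma\subseteq I\cap L^1_\sigma$ and $L^1_\sigma/(J\cap L^1_\sigma)\cong\mathrm{End}(E(\sigma))$ is simple, $I\cap L^1_\sigma\not\subseteq J$ gives the full inclusion $L^1_\sigma\subseteq I$. In particular $I$ contains some $e$ with $\pi(e)=P_\sigma$.

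(2) Propagate this to every $\tau\in\widehat{K}$, using irreducibility a second time. If $E(\tau)=0$, then $\pi(L^1_\tau)=0$, so $L^1_\tau\subseteq J\subseteq I$. If $E(\tau)\neq0$, pick $0\neq w\in E(\tau)$. Density of $\pi(L^1(G))w$ in $E$ gives $b$ with $u:=P_\sigma\pi(b)w\neq0$. Density of $\pi(L^1(G))u$ then gives $a$ with $P_\tau\pi(a)u\neq0$. Hence $p_\tau*a*e*b*p_\tau\in(I\cap L^1_\tau)\setminus J$, and (1) yields $L^1_\tau\subseteq I$.

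(3) Take $f\in L^1(G)$ and a bounded approximate identity $(u_\alpha)$. Then $f*p_\tau=\lim_\alpha f*(p_\tau*u_\alpha*p_\tau)$, and each term lies in $I$, so $f*p_\tau\in I$ because $I$ is a closed left ideal. Finite sums of such $f*p_\tau$ (the right $K$-finite functions) are dense in $L^1(G)$, so $I=L^1(G)$.

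Step (2) is the one your sketch never addresses. A single full corner inside $I$ does not by itself generate $L^1(G)$ as a closed ideal; for compact $G=K$ it only gives one isotypic block. It is the combination of $J\subseteq I$ with irreducibility that spreads it over all $K$-types.
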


\begin{rem}
In a unital Banach algebra, maximal ideals are automatically closed, so maximal and maximal closed are the same thing. However, in the non-unital setting, it can happen that an ideal is maximal closed but not maximal in the set of all proper ideals (see \cite[pg.\ 515-516]{God52}.)
\end{rem}

We now show that if $(G,K)$ is a Gelfand pair and $\pi_{z,p}$ is a boundary representation of $G$ for some $z \in \mathbb{C}$, then the finite-dimensionality assumption of Proposition \ref{GodementThm} is satisfied.

To do this, let $G$ be a locally compact group and $p \in [1,\infty)$. The left-regular representation of $G$ on $L^p(G)$ will be denoted by $\lambda_{G,p}$. 

\begin{lem}
Let $K$ be a compact group, $\sigma \in \widehat{K}$ and $p \in [1,\infty)$. Then $\lambda_{K,p}(p_\sigma)$ is finite rank.
\end{lem}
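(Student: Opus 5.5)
We need to show that the operator $\lambda_{K,p}(p_\sigma)f = p_\sigma * f$ on $L^p(K)$ has finite-dimensional range. Since $K$ is compact with normalised Haar measure, it is unimodular and $L^p(K)\subseteq L^1(K)$. The plan is to show that every function in the range lies in the finite-dimensional space
\begin{displaymath} \mathcal{M}_\sigma := \spn\{ k \mapsto \langle \sigma(k)\xi,\eta\rangle : \xi,\eta \in \H_\sigma\}, \end{displaymath}
the space of matrix coefficients of $\sigma$. This space has dimension at most $\dim(\sigma)^2 < \infty$, because $\sigma$, being an irreducible unitary representation of a compact group, is finite-dimensional.

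First I will compute the convolution explicitly. For $f \in L^p(K)$ and $x\in K$,
\begin{displaymath} (p_\sigma * f)(x) = \int_K p_\sigma(k) f(k^{-1}x)\dd k = \int_K p_\sigma(xk^{-1}) f(k) \dd k, \end{displaymath}
using the substitution $k\mapsto xk^{-1}$ together with unimodularity and invariance of Haar measure under inversion. Next, fix an orthonormal basis $e_1,\dots,e_d$ of $\H_\sigma$ with $d=\dim(\sigma)$. Then
\begin{displaymath} \overline{\chi_\sigma(xk^{-1})} = \sum_{i,j}\overline{\sigma_{ij}(x)}\,\overline{\sigma_{ji}(k^{-1})} = \sum_{i,j}\overline{\sigma_{ij}(x)}\,\sigma_{ij}(k), \end{displaymath}
where $\sigma_{ij}(k):=\langle \sigma(k)e_j,e_i\rangle$ and we use unitarity, $\sigma_{ji}(k^{-1}) = \overline{\sigma_{ij}(k)}$. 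Substituting gives
\begin{displaymath} (p_\sigma * f)(x) = d\sum_{i,j} \Big(\int_K \sigma_{ij}(k) f(k)\dd k\Big)\, \overline{\sigma_{ij}(x)}. \end{displaymath}
Each coefficient is finite by H\"older's inequality, since $\sigma_{ij}$ is bounded and $K$ has finite measure. So the range of $\lambda_{K,p}(p_\sigma)$ is contained in $\spn\{\overline{\sigma_{ij}}\}$, which has dimension at most $d^2$, and the operator is therefore finite rank.

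I do not expect a genuine obstacle here. The only points needing care are the convention for convolution and for $\lambda_{K,p}(h)=\int_K h(k)\lambda_{K,p}(k)\dd k$, and justifying the interchange of the Bochner integral with point evaluation. For the latter, one can check the identity against test functions in $L^{p'}(K)$, or first for continuous $f$ and then extend by density and continuity, since $\lambda_{K,p}(p_\sigma)$ is bounded with norm at most $\norm{p_\sigma}_{1}$.
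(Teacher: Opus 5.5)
Your proof is correct, and it is more direct than the paper's. The paper first uses the Peter--Weyl theorem: $\spn\bigcup_{\tau\in\widehat K}F_\tau$ is dense in $L^p(K)$. It then uses the Schur orthogonality relations to see that $\lambda_{K,p}(p_\sigma)$ maps each finite sum of matrix coefficients into $F_{\overline{\sigma}}$. Finally it passes to limits, using boundedness of the operator and closedness of the finite-dimensional space $F_{\overline{\sigma}}$.

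You instead expand $\overline{\chi_\sigma(xk^{-1})}=\sum_{i,j}\overline{\sigma_{ij}(x)}\,\sigma_{ij}(k)$. This gives $p_\sigma*f=\dim(\sigma)\sum_{i,j}\bigl(\int_K\sigma_{ij}(k)f(k)\dd k\bigr)\overline{\sigma_{ij}}$ for every $f\in L^p(K)$ at once. So the range lies in the same space $F_{\overline{\sigma}}$ with no density argument, no limiting step, and no orthogonality relations.

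Your route is more elementary and gives the explicit bound $\rank\le\dim(\sigma)^2$. It also never uses irreducibility of $\sigma$: convolution by any matrix coefficient of a finite-dimensional unitary representation is finite rank on $L^p(K)$. The paper's route shows more structure than the lemma needs: by orthogonality, $\lambda_{K,p}(p_\sigma)$ is the identity on $F_{\overline{\sigma}}$ and annihilates $F_\tau$ for $\tau\not\cong\overline{\sigma}$.

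The technical point you flag is identifying the Bochner integral $\int_K p_\sigma(k)\lambda_{K,p}(k)f\dd k$ with the pointwise convolution. Your suggestion of pairing with $g\in L^{p'}(K)$ and applying Fubini settles it.
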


\begin{proof}
Given $\tau \in \widehat{K}$, let $F_\tau$ be the span of the matrix coefficients of $\tau$ inside $L^p(K)$. Since $\tau$ must be finite dimensional, the subspace $F_\tau$ is finite dimensional, hence $F_\tau$ is closed. By \cite[Theorem 5.11]{Fol16},
$\spn \bigcup_{\tau \in \widehat{K}} F_\tau$ is dense in $L^p(K)$. Now consider a finite linear combination $f:= \sum_{i=1}^n f_{\tau_i}$ where $\tau_1,\dots,\tau_n \in \widehat{K}$ and $f_{\tau_i} \in F_{\tau_i}$ for each $i$. Then the orthogonality relations imply that 
$$\lambda_{K,p}(p_\sigma)f = d_\sigma \sum_{i=1}^n \overline{\chi_\sigma}*f_{\tau_i} = d_\sigma \sum_{i=1}^n \chi_{\overline{\sigma}}*f_{\tau_i} \in F_{\overline{\sigma}}.$$
Now, a generic $f \in L^p(K)$ can be written as a limit $f = \lim_\beta f_\beta$, where each $f_\beta \in  \spn \bigcup_{\sigma \in \widehat{K}} F_\sigma$. Then for each index $\beta$ we have $\lambda_{K,p}(p_\sigma)f_\beta \in F_{\overline{\sigma}}$ by the above, so that $\lambda_{K,p}(p_\sigma)f = \lim_\beta \lambda_{K,p}(p_\sigma)f_\beta \in F_{\overline{\sigma}}$ as well. Hence the image of $\lambda_{K,p}(p_\sigma)$ is contained in $F_{\overline{\sigma}}$, which proves the result.
\end{proof}


\begin{prop}\label{prop:bdadm}
Let $G$ be a locally compact group and $K$ a compact subgroup of $G$ such that $(G,K)$ is a Gelfand pair. Let $P$ be a maximal cocompact amenable subgroup of $G$ such that $G=KP$. For $z \in \mathbb{C}$ and $p \in [2,\infty)$, consider the boundary representation $\pi_{z,p}$ of $G$ on $E := L^p(G/P)$. Then, for every $\sigma \in \widehat{K}$, $\dim(E(\sigma)) < \infty$.
\end{prop}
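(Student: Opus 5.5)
Since $K$ is compact, the Radon--Nikodym factor $\frac{\dd k\mu}{\dd \mu}$ is identically $1$ for $k \in K$, because $\mu$ is $K$-invariant. So the restriction $\pi_{z,p}|_K$ is simply the quasi-regular representation of $K$ on $L^p(G/P)$. Via the homeomorphism $\varphi: K/K\cap P \to G/P$, the measure $\mu$ corresponds to the pushforward of the normalised Haar measure of $K$. Hence $\pi_{z,p}|_K$ is isometrically isomorphic to the left regular representation of $K$ on $L^p(K/K\cap P)$. We regard $L^p(K/K\cap P)$ as the closed subspace of right $K\cap P$-invariant functions in $L^p(K)$, which is invariant under $\lambda_{K,p}$.

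It follows that, for each $\sigma \in \widehat{K}$, the projection $P_\sigma = \pi_{z,p}(p_\sigma) = \int_K \pi_{z,p}(k)p_\sigma(k)\dd k$ corresponds under this isomorphism to the restriction of $\lambda_{K,p}(p_\sigma)$ to the subspace $L^p(K/K\cap P)$. To justify this, write out the Bochner integrals applied to a function $f$, and observe that the integrals agree pointwise almost everywhere because the cocycle is trivial on $K$. Therefore $E(\sigma) = P_\sigma(E)$ is isomorphic to a subspace of the image of $\lambda_{K,p}(p_\sigma)$ on $L^p(K)$. By the preceding lemma this image is finite rank; indeed it is contained in $F_{\overline{\sigma}}$. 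Hence $\dim E(\sigma) < \infty$.

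The only point requiring care is the identification of measures: that $\mu$, pulled back along $\varphi$, is exactly the quotient measure of the normalised Haar measure on $K$. This holds by the definition of $\mu$. Consequently the embedding $L^p(K/K\cap P) \hookrightarrow L^p(K)$ is isometric and $K$-equivariant.

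Neither the Gelfand pair hypothesis nor the restriction $p \ge 2$ is really needed beyond guaranteeing the existence of $P$ with $G = KP$ via Theorem \ref{thm:monod}. The argument works for any $p \in [1,\infty)$.
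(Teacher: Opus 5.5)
Your proposal is correct and follows essentially the same route as the paper. The cocycle is trivial on $K$, so $\pi_{z,p}|_K$ is the left regular representation of $K$ on $L^p(K/K\cap P)\subseteq L^p(K)$, which gives $\rank\,\pi_{z,p}(p_\sigma)\le \rank\,\lambda_{K,p}(p_\sigma)<\infty$ by the preceding lemma. Your remark that $p\ge 2$ plays no role is also right, since the paper's argument does not use it either. For the correspondence of $P_\sigma$ with $\lambda_{K,p}(p_\sigma)$, the cleanest justification is that a bounded intertwiner commutes with Bochner integrals, rather than comparing the integrals pointwise almost everywhere.
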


\begin{proof}
Recall from Section \ref{sec:bdrep} that we have a homeomorphism $G/P \cong K/K\cap P$. Also, note that, for any $k \in K$ and $x \in G$, $\rho(k^{-1}x) = \rho(x)$. Thus, for $k \in K$ and $f \in L^p(G/P)$ we have 
\begin{equation}\label{eq2}
\pi_{z,p}(k)f(xP) = f(k^{-1}xP)
\end{equation}
since $\frac{\rho(k^{-1}x)}{\rho(x)}=1$.
Also, since we have a homeomorphism $G/P \cong K/K \cap P$, $L^p(G/P)$ may be identified with $L^p(K/K \cap P)$, which in turn may be identified with the subspace of $L^p(K)$ consisting of functions that are constant on cosets of $K \cap P$. Under this identification, equation \eqref{eq2} tells us that the action of $\pi_{z,p}$ when restricted to $K$ coincides with the action of the left-regular representation $\lambda_{K,p}$ on the subspace $L^p(K/K \cap P)$. Thus, given $\sigma \in \widehat{K}$, we have $\rank(\pi_{z,p}(p_\sigma)) \leq \rank(\lambda_{K,p}(p_\sigma)) < \infty$. The proposition then follows immediately.
\end{proof}


\section{Proof of Theorem A}

Throughout this section we assume the hypotheses and notation specified in Theorem A.

We start with the following lemma. A Banach $*$-algebra $\A$ is called \hlight{Wiener} if every proper closed ideal of $\A$ is contained in the kernel of a topologically irreducible non-degenerate $*$-representation of $\A$ \cite[Definition 11.5.3]{Pal01}. This just generalises the definition given for $L^1(G)$ in the introduction. 

\begin{lem}\label{lem:propM}
Suppose that $\A$ is a Wiener Banach $*$-algebra. Then every maximal closed ideal of $\A$ is $*$-closed. 
\end{lem}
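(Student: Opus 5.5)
Let $I$ be a maximal closed ideal of $\A$; in particular $I$ is proper. Since $\A$ is Wiener, there is a topologically irreducible non-degenerate $*$-representation $\pi$ of $\A$ on a Hilbert space $\H$ with $I \subseteq \ker(\pi)$. The aim is to show $I = \ker(\pi)$, and then to show that kernels of $*$-representations are automatically $*$-closed.

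\textbf{Kernels of $*$-representations are $*$-closed ideals.} The kernel $\ker(\pi)$ is a two-sided ideal because $\pi$ is a homomorphism. It is closed because $*$-representations of Banach $*$-algebras are automatically contractive, hence continuous. Alternatively, one can use the standard fact that any $*$-homomorphism from a Banach $*$-algebra into a $C^*$-algebra is bounded. It is $*$-closed because $\pi(a^*) = \pi(a)^*$: if $\pi(a)=0$ then $\pi(a^*)=0$.

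\textbf{Maximality forces $I = \ker(\pi)$.} By the previous step, $\ker(\pi)$ is a closed ideal containing $I$. By maximality of $I$ among closed ideals, either $\ker(\pi) = I$ or $\ker(\pi) = \A$. The second case would mean $\pi = 0$. This is impossible, since a non-degenerate representation on a nonzero Hilbert space cannot vanish identically: non-degeneracy says $\pi(\A)\H$ is dense in $\H$. Topological irreducibility requires $\H \neq 0$ by convention. Hence $I = \ker(\pi)$, and $I$ is $*$-closed.

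\textbf{Expected obstacle.} The only point needing care is ruling out $\ker(\pi) = \A$, i.e.\ using non-degeneracy together with $\H \ne 0$. I would state this explicitly, noting that it is exactly why the definition of Wiener requires non-degenerate representations. The continuity of $\pi$, needed for closedness of $\ker(\pi)$, is standard.
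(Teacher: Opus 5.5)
Your proof is correct and follows the same route as the paper. Take the $*$-representation $\pi$ supplied by the Wiener property, use maximality of $I$ to get $I=\ker(\pi)$, and conclude from $\pi(a^*)=\pi(a)^*$ that $I$ is $*$-closed, while you also justify two points the paper leaves implicit, namely that $\ker(\pi)$ is closed (by continuity of $*$-representations) and proper (by non-degeneracy on a nonzero Hilbert space), which is exactly what is needed to invoke maximality.
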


\begin{proof}
 Suppose that $\A$ has the Wiener property and let $I$ be a maximal closed ideal of $\A$. According to the Wiener property, there exists a non-trivial $*$-representation $\pi$ of $\A$ with $I \subseteq \ker(\pi)$. By maximality of $I$, we must have that $\ker(\pi) = I$. Since $\pi$ is a $*$-representation, $\ker(\pi)$ is $*$-closed, and so $I$ is $*$-closed. 
\end{proof}

We now prove some lemmas about boundary representations and their matrix coefficients.

\begin{lem}\label{lem:isom}
Fix $z \in \mathbb{C}$ with $0 < \Re(z) < 1$ and define $p := 1/\Re(z)$. The representation $(\pi_{z,p}, L^p(G/P))$ is an isometric representation.
\end{lem}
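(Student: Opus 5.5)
The plan is a direct change-of-variables computation of $\norm{\pi_{z,p}(g)f}_{L^p(G/P)}^p$ for a fixed $g \in G$ and $f \in L^p(G/P)$, showing that it equals $\norm{f}_{L^p(G/P)}^p$. Write
\begin{displaymath} c(g,xP) := \frac{\dd g\mu}{\dd \mu}(xP) = \frac{\rho(g^{-1}x)}{\rho(x)}. \end{displaymath}
This is a strictly positive real number, so $\lvert c(g,xP)^z \rvert = c(g,xP)^{\Re(z)}$. Since $p = 1/\Re(z)$, this gives
\begin{displaymath} \lvert \pi_{z,p}(g)f(xP) \rvert^p = c(g,xP)^{p\Re(z)}\lvert f(g^{-1}xP)\rvert^p = c(g,xP)\lvert f(g^{-1}xP)\rvert^p . \end{displaymath}
This identity is why the exponent $p = 1/\Re(z)$ is chosen: the imaginary part of $z$ contributes only a unimodular factor, and the real part contributes exactly one power of the Radon--Nikodym derivative.

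Integrating against $\mu$, it remains to show
\begin{displaymath} \int_{G/P} \frac{\dd g\mu}{\dd \mu}(xP)\, F(g^{-1}xP)\dd\mu(xP) = \int_{G/P} F(xP)\dd\mu(xP) \end{displaymath}
for nonnegative measurable $F$; here we take $F = \lvert f\rvert^p$. This is the change-of-variables formula for the pushforward measure $g\mu$, where $g\mu(E) = \mu(g^{-1}E)$. The left-hand side equals $\int F(g^{-1}y)\dd(g\mu)(y)$, and substituting $y = gx'$ turns it into $\int F(x')\dd\mu(x')$. So $\norm{\pi_{z,p}(g)f}_p = \norm{f}_p$. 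Since $\pi_{z,p}(g)$ is invertible with inverse $\pi_{z,p}(g^{-1})$ (by the cocycle identity for $c$), each $\pi_{z,p}(g)$ is in fact a surjective isometry.

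The only delicate point is the convention. One must check that the stated identity $\frac{\dd g\mu}{\dd\mu}(xP) = \rho(g^{-1}x)/\rho(x)$ uses the same meaning of $g\mu$ as the substitution above, namely that $g\mu$ is the pushforward of $\mu$ under $x \mapsto gx$. If the convention were reversed, the density would pair with $F(gx)$ instead, and the computation would fail. To settle this I would verify it on $C(G/P)$ using $\rho(kp)=\Delta_P(p)$ and the quasi-invariance formula from \cite[\S2.6]{Fol16}. The rest is routine.
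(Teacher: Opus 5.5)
Your proposal is correct and follows essentially the same route as the paper's proof: both use $\lvert c^z\rvert^p = c^{p\Re(z)} = c$ and then absorb the Radon--Nikodym factor via $\int_{G/P} \frac{\dd g\mu}{\dd\mu}(xP)\,F(g^{-1}xP)\,\dd\mu(xP) = \int_{G/P} F\,\dd\mu$. Your convention check comes out in your favour: $g\mu$ has to be the pushforward under $x\mapsto gx$. This is also what makes $\pi_{z,p}$ a homomorphism via the cocycle identity, and it is consistent with the formula $\frac{\dd g\mu}{\dd\mu}(xP)=\rho(g^{-1}x)/\rho(x)$ from Folland. Your remark on surjectivity is a harmless addition.
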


\begin{proof}
Indeed, for any $f \in L^p(G/P)$, one makes the following computation:
\begin{align*} &\norm{\pi_{z,p}(g)f}_{L^p(G/P)}^p = \int_{G/P} \bigg\lvert \pi_{z,p}(g) f(xP) \bigg\rvert^p \dd\mu(xP) \\
&= \int_{G/P} \bigg\lvert \bigg(\frac{\dd g\mu}{\dd \mu}(xP) \bigg)^z  f(g^{-1}xP) \bigg\rvert^p \dd\mu(xP) \\
&= \int_{G/P} \bigg( \frac{\dd g\mu}{\dd \mu}(xP) \bigg)^{p\Re(z)} \lvert f(g^{-1}xP) \rvert^p \dd\mu(xP) \\
&= \int_{G/P} \frac{\dd g\mu}{\dd \mu}(xP) \lvert f(g^{-1}xP) \rvert^p \frac{\dd \mu}{\dd g \mu}(xP)\dd\mu(g^{-1}xP) \\
&= \int_{G/P} \lvert f(g^{-1}xP) \rvert^p \dd\mu(g^{-1}xP) = \norm{f}_{L^p(G/P)}^p. \qedhere \\
\end{align*}
\end{proof}

\begin{lem}\label{lem:bdcoef}
For any $z \in \mathbb{C}$ with $0 < \Re(z) < 1$, the matrix coefficient $\varphi_z$ is bounded.
\end{lem}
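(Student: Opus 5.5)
We bound $\varphi_z$ by writing it as a pairing between an $L^p$ space and its dual, using the isometric representation of Lemma \ref{lem:isom}. Fix $z$ with $0<\Re(z)<1$ and set $p:=1/\Re(z)\in(1,\infty)$. Since $\mu$ is a probability measure on $G/P$ (the pushforward of normalised Haar measure on $K$), we have $\mathds{1}_{G/P}\in L^q(G/P)$ for every $q\in[1,\infty]$, with norm $1$.

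The first step is to note that the formula defining $\pi_z(g)\mathds{1}_{G/P}$ does not depend on the exponent of the ambient $L^r$ space. Indeed,
\begin{displaymath} \pi_z(g)\mathds{1}_{G/P}(xP)=\bigg(\frac{\dd g\mu}{\dd\mu}(xP)\bigg)^z=\pi_{z,p}(g)\mathds{1}_{G/P}(xP) \end{displaymath}
as functions on $G/P$. Hence
\begin{displaymath} \varphi_z(g)=\int_{G/P}\big(\pi_{z,p}(g)\mathds{1}_{G/P}\big)(xP)\,\overline{\mathds{1}_{G/P}(xP)}\dd\mu(xP). \end{displaymath}
This integral is a pairing of $L^p(G/P)$ with its dual $L^{p'}(G/P)$, where $1/p+1/p'=1$. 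Since $\pi_z(g)\mathds{1}_{G/P}$ is continuous in $xP$ (by continuity of $\rho$), all integrals involved make sense.

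The second step is Hölder's inequality combined with Lemma \ref{lem:isom}:
\begin{displaymath} |\varphi_z(g)|\le \norm{\pi_{z,p}(g)\mathds{1}_{G/P}}_{L^p}\norm{\mathds{1}_{G/P}}_{L^{p'}}=\norm{\mathds{1}_{G/P}}_{L^p}\norm{\mathds{1}_{G/P}}_{L^{p'}}=1. \end{displaymath}
Thus $\varphi_z$ is bounded by $1$ on $G$.

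If one prefers to avoid Lemma \ref{lem:isom}, the same conclusion follows directly. We have $|\varphi_z(g)|\le\int c(g,xP)^{\Re(z)}\dd\mu$, and Jensen's (or Hölder's) inequality with exponent $1/\Re(z)\ge1$ bounds this by $\big(\int c(g,xP)\dd\mu\big)^{\Re(z)}=(g\mu)(G/P)^{\Re(z)}=1$.

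The only point needing care is the identification of the $L^2$-model vector with the $L^p$-model vector, so that Lemma \ref{lem:isom} applies. This is immediate because $\mathds{1}_{G/P}$ and its translates are continuous bounded functions on a compact space, so they lie in every $L^q$. I do not expect any genuine obstacle.
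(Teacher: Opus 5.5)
Your proof is correct and follows the paper's argument: with $p=1/\Re(z)$ you realise $\varphi_z$ as the pairing of $\pi_{z,p}(g)\mathds{1}_{G/P}\in L^p(G/P)$ against $\mathds{1}_{G/P}\in L^{p'}(G/P)$ and use the isometry of Lemma \ref{lem:isom} to get $|\varphi_z(g)|\le 1$. Your Jensen variant, $\int c(g,xP)^{\Re(z)}\dd\mu \le \big(\int c(g,xP)\dd\mu\big)^{\Re(z)}=1$, is the same isometry computation written out directly.
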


\begin{proof}
Set $p:= 1/\Re(z)$. By the previous lemma, the representation $(\pi_{z,p}, L^p(G/P))$ is an isometric representation. Now consider $\mathds{1}_{G/P}$ simultaneously as the characteristic function on $G/P$ (which is an element of $L^p(G/P)$) and as the constant 1 linear functional in $L^p(G/P)^* \cong L^q(G/P)$, where $q$ is the Hölder conjugate of $p$. Then, the corresponding matrix coefficient as defined in Definition \ref{dfn:matrixcoef} is given by
\begin{displaymath} \langle \pi_{z,p}(g)\mathds{1}_{G/P}, \mathds{1}_{G/P} \rangle  = \int_{G/P} \bigg(\frac{\dd g \mu}{\dd \mu}(xP) \bigg)^z \dd \mu(xP). \end{displaymath}
But one checks easily that this matrix coefficient is precisely equal to the matrix coefficient $\varphi_z(g) = \langle \pi_{z,2}(g) \mathds{1}_{G/P}, \mathds{1}_{G/P} \rangle_{L^2(G/P)}$.
The result then follows from the fact that matrix coefficients of an isometric Banach space representation are bounded.
\end{proof}

\begin{lem}\label{lem:irr}
Let $z \in \mathbb{C}$ with $0 < \Re(z) < 1/2$. Set $p := 1/ \Re(z)$. If the representation $(\pi_{z,2},L^2(G/P))$ is topologically irreducible, then the representation $(\pi_{z,p},L^p(G/P))$ is also topologically irreducible.
\end{lem}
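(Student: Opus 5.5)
Since $0<\Re(z)<1/2$, we have $p=1/\Re(z)>2$. Because $G/P$ carries the finite measure $\mu$, there is a continuous inclusion $\iota: L^p(G/P)\hookrightarrow L^2(G/P)$ with dense image: it contains the continuous functions, which are dense in $L^2$. The formula defining $\pi_{z,p}(g)$ is the same as the one defining $\pi_{z,2}(g)$, so $\iota\,\pi_{z,p}(g)=\pi_{z,2}(g)\,\iota$ for all $g\in G$. The plan is to pull a closed invariant subspace of $L^p$ back to a closed invariant subspace of $L^2$ using the $K$-isotypic decomposition, which Proposition \ref{prop:bdadm} shows is finite dimensional in each component.

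\textbf{Step 1: isotypic subspaces.} Fix $\sigma\in\widehat{K}$. Restricted to $K$, both $\pi_{z,p}$ and $\pi_{z,2}$ act by left translation on functions on $K/K\cap P$, by \eqref{eq2}. The proof of the Lemma preceding Proposition \ref{prop:bdadm} shows that the image of $P_\sigma$ lies in $F_{\overline{\sigma}}\cap L^p(K/K\cap P)$. That space consists of continuous functions, so it sits in every $L^q$. Hence $L^p(\sigma)=L^2(\sigma)$ as the same finite-dimensional space of continuous functions. The projections $P_\sigma$ are both given by convolution with $p_\sigma$, so they agree on $L^p\subseteq L^2$.

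\textbf{Step 2: pulling back subspaces.} Let $W\subseteq L^p(G/P)$ be a nonzero closed $\pi_{z,p}$-invariant subspace. Let $\overline{W}$ denote its closure in $L^2(G/P)$. This closure is $\pi_{z,2}$-invariant, since each $\pi_{z,2}(g)$ is bounded on $L^2$ and agrees with $\pi_{z,p}(g)$ on $W$. It is nonzero, since $\iota$ is injective. By the topological irreducibility of $\pi_{z,2}$, we get $\overline{W}=L^2(G/P)$.

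\textbf{Step 3: recovering all isotypic pieces.} Apply the continuous projection $P_\sigma$ on $L^2$. The set $P_\sigma(W)$ is dense in $P_\sigma(L^2)=L^2(\sigma)$. Since $L^2(\sigma)$ is finite dimensional, $P_\sigma(W)=L^2(\sigma)=L^p(\sigma)$. Also $P_\sigma(W)\subseteq W$, because $W$ is closed and invariant under $K$ and $P_\sigma$ is a Bochner integral of $\pi_{z,p}(k)$. Hence $W\supseteq L^p(\sigma)$ for every $\sigma\in\widehat{K}$.

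\textbf{Step 4: density in $L^p$.} The span of the $F_\tau$ is dense in $L^p(K)$ by \cite[Theorem 5.11]{Fol16}. Averaging over $K\cap P$ on the right, a contraction that maps each $F_\tau$ into itself, shows that $\spn\bigcup_\sigma L^p(\sigma)$ is dense in $L^p(K/K\cap P)$. Therefore $W=L^p(G/P)$, and $\pi_{z,p}$ is topologically irreducible.

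The main obstacle is Step 1. One must check carefully that the isotypic components for $p$ and for $2$ coincide, including the identification of $P_\sigma$ in both spaces. One must also check that the right $K\cap P$-averaging in Step 4 preserves the finite-dimensional spaces $F_\tau$, so that density is not lost on the quotient.
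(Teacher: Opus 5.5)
Your proof is correct, but it takes a different route from the paper.

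The paper argues with smooth vectors. Since $L^p(G/P)\subseteq L^2(G/P)$, the smooth vectors of $\pi_{z,p}$ form a nonzero $G$-submodule of the smooth vectors of $\pi_{z,2}$. By Proposition \ref{prop:completionrep}(iii), the latter module is algebraically irreducible because $\pi_{z,2}$ is topologically irreducible. So the two coincide, and a second application of Proposition \ref{prop:completionrep}(iii), now in $L^p$, gives topological irreducibility.

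You work with $K$-isotypic components instead. Their finite-dimensionality is Proposition \ref{prop:bdadm}, which the paper needs anyway for Godement's theorem. Your argument never uses a $G$-invariant dense core. You take the $L^2$-closure of $W$ and use irreducibility there. You then pull back each finite-dimensional piece $L^2(\sigma)=L^p(\sigma)$ into $W$ via $P_\sigma$, and conclude by Peter--Weyl density after right $K\cap P$-averaging.

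One compressed step deserves a word. In Step 1, the equality $L^2(\sigma)=L^p(\sigma)$ uses that $P_\sigma$ is idempotent. Each $f\in L^2(\sigma)$ satisfies $f=P_\sigma f$ and is continuous, hence lies in $L^p$, so $f\in P_\sigma(L^p)$.

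What the two approaches buy:
\begin{itemize}
\item The paper's argument is shorter once Proposition \ref{prop:completionrep} is available.
\item That proposition, and the notion of smooth vector itself, only make sense for totally disconnected $G$. Section 3, however, works under the hypotheses of Theorem A, where $G$ is an arbitrary locally compact group.
\item Your argument uses only compactness of $K$, the Iwasawa decomposition, and finite-dimensionality of isotypic components. It therefore proves the lemma in the generality in which Theorem A is stated.
\item The paper's proof as written covers only the tdlc case. That suffices for Theorem B.
\end{itemize}
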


\begin{proof}
Assume that $(\pi_{z,2},L^2(G/P))$ is topologically irreducible. Then, since $p >2$ and $G/P$ is compact, $L^p(G/P) \subseteq L^2(G/P)$. This implies that $L^p(G/P)^\infty \subseteq L^2(G/P)^\infty$. By Proposition \ref{prop:completionrep}(iii), since $L^2(G/P)$ is topologically irreducible, $L^2(G/P)^\infty$ is algebraically irreducible. But $L^p(G/P)^\infty$ is a non-trivial submodule of $L^2(G/P)^\infty$, hence, $L^p(G/P)^\infty = L^2(G/P)^\infty$. Thus $L^p(G/P)^\infty$ is algebraically irreducible, and by applying Proposition \ref{prop:completionrep}(iii) again, we get that $(\pi_{z,p},L^p(G/P))$ is topologically irreducible. 
\end{proof}

%

We now prove Theorem A. \

\textit{Proof of Theorem A.} Suppose the hypotheses of Theorem A. Fix $z \in \mathbb{C}$ with $0<\Re(z) <1/2$ such that the representation $(\pi_{z,2},L^2(G/P))$ is topologically irreducible and the matrix coefficient $\varphi_z(g):= \langle \pi_{z,2}(g) \mathds{1}_{G/P}, \mathds{1}_{G/P} \rangle_{L^2(G/P)}$ is not positive definite. Set $p:= 1/\Re(z) \in (2,\infty)$. Then, by Lemma \ref{lem:isom} and Lemma \ref{lem:irr}, $(\pi_{z,p},L^p(G/P))$ is an isometric representation and it is topologically irreducible. 

Since $(\pi_{z,p},L^p(G/P))$ is an isometric representation, it extends to a representation of $L^1(G)$ which we also denote by $(\pi_{z,p},L^p(G/P))$. Furthermore, by Proposition \ref{prop:bdadm}, it follows that the representation $(\pi_{z,p},L^p(G/P))$ satisfies the hypotheses of Proposition \ref{GodementThm} and thus the kernel of $(\pi_{z,p},L^p(G/P))$ as a representation of $L^1(G)$, denoted by $I := \ker_{L^1(G)}(\pi_{z,p})$, is a maximal closed ideal in $L^1(G)$. To complete the proof of Theorem A, by Lemma \ref{lem:propM}, it suffices to show that the ideal $I$ is not $*$-closed.

To do this, note that by Lemma \ref{lem:bdcoef}, the matrix coefficient $\varphi_z$ is bounded and hence defines a character $\chi_{\varphi_z}$ of $L^1(K \backslash G /K)$ whose kernel, denoted $J := \ker_{L^1(K \backslash G /K)}(\chi_{\varphi_z})$, is not $*$-closed by Corollary \ref{cor:starclosed}. To complete the proof, it suffices to show that $I \cap L^1(K \backslash G /K) = J$. Indeed, since $L^1(K \backslash G /K)$ is a $*$-subalgebra of $L^1(G)$, if $I$ was $*$-closed and $I \cap L^1(K \backslash G /K) = J$, then $J$ must be $*$-closed too, which would be a contradiction.

To see that $I \cap L^1(K \backslash G /K) = J$, let $\mathds{1}_K$ denote the characteristic function on $K$. We know that $L^1(K \backslash G /K) = \mathds{1}_K * L^1(G) * \mathds{1}_K$. Now suppose that $f \in L^1(K \backslash G /K)$. Then $f = \mathds{1}_K * f * \mathds{1}_K$ and so $\pi_{z,p}(f) = \pi_{z,p}(\mathds{1}_K) \pi_{z,p}(f) \pi_{z,p}(\mathds{1}_K)$. Note that the operator $\pi_{z,p}(\mathds{1}_K)$ is the projection onto the span of $\mathds{1}_{G/P}$ in $L^p(G/P)$, thus, one sees that $\pi_{z,p}(f) =0$ if and only if $\pi_{z,p}(f)\mathds{1}_K = 0$. 

Since $\pi_{z,p}(f) = \pi_{z,p}(\mathds{1}_K) \pi_{z,p}(f) \pi_{z,p}(\mathds{1}_K)$, there must exist $\lambda(f) \in \mathbb{C}$ such that $\pi_{z,p}(f) \mathds{1}_{G/P}= \lambda(f) \mathds{1}_{G/P}$. But using the matrix coefficient as defined in Definition \ref{dfn:matrixcoef}, and viewing $\mathds{1}_{G/P}$ simultaneuosly as an element of $L^p(G/P)$ and $L^p(G/P)^* \cong L^q(G/P)$ (where $q$ is the Hölder conjugate of $p$), we have that
\begin{align*}
\lambda(f) &= \langle \pi_{z,p}(f) \mathds{1}_{G/P}, \mathds{1}_{G/P} \rangle = \int_G f(x) \langle \pi_{z,p}(x) \mathds{1}_{G/P}, \mathds{1}_{G/P} \rangle \dd x \\
&= \int_G f(x) \langle \pi_{z,2}(x) \mathds{1}_{G/P}, \mathds{1}_{G/P} \rangle_{L^2(G/P)} \dd x = \chi_{\varphi_z}(f).
\end{align*}
Thus $\pi_{z,p}(f) = 0$ if and only if $\chi_{\varphi_z}(f) = 0$. This implies that $I \cap L^1(K \backslash G /K) = J$ which completes the proof. \qed


\section{Proof of Theorem B}

In this section we complete the proof of Theorem B from the introduction.

\subsection{Proof of Theorem B(i)} 

We now give a proof of Theorem B(i). To start, we introduce some preliminary notation and results from the literature that will be used in the proof. 

Let $d_1$ and $d_2$ be two natural numbers $\ge 2$ and denote by $T_{d_1,d_2}$ the semi-regular tree of degree $(d_1,d_2)$. In particular, $T_{d_1,d_2}$ is the infinite bipartite tree such that, with respect to the associated bipartition of the vertex set $VT_{d_1,d_2} = X \sqcup Y$, all the vertices in $X$ have degree $d_1$, and all the vertices in $Y$ have degree $d_2$. We refer the reader to \cite[Chapter 1]{FTN91} or \cite[Chapitre I]{Cho94} for further information on (semi-)regular trees and groups acting on them. Standard terminology and results from these references will be used in this section.

The following is a consequence of the results in \cite{Nev91}.

\begin{lem}\cite{Nev91}
Let $X$ be a connected locally finite graph with infinitely many ends and suppose that $G \le \Aut(X)$ is a closed non-compact subgroup acting transitively on the boundary of $X$. Then, there exists natural numbers $d_1 \ge 2$ and $d_2 \ge 3$, such that, $G$ admits a quotient onto a closed subgroup $H \le \Aut(T_{d_1,d_2})$ which acts transitively on the boundary of $T_{d_1,d_2}$ and has at most two orbits on the vertices.
\end{lem}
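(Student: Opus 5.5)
To prove the lemma I will build a tree from the cut structure of $X$ and let $G$ act on it. Since $X$ is connected, locally finite and has infinitely many ends, I will use Dunwoody's structure tree $T$ associated with a $G$-invariant nested family of finite cuts. Concretely, I take the $\Aut(X)$-invariant collection of \emph{tight} cuts of minimal size separating ends. By Dunwoody's theorem, this collection contains a $G$-invariant nested subfamily $\mathcal{E}$. The edges of the structure tree $T$ are the elements of $\mathcal{E}$ up to complementation, and its vertices are the equivalence classes of the relation generated by ``$C \le D$ with nothing of $\mathcal{E}$ strictly in between''. Because each cut is finite and $X$ is locally finite, $T$ is locally finite. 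The action of $G$ on $X$ induces a continuous action $G \to \Aut(T)$, and I set $H$ to be the closure of its image. Following Nevo/M\"oller-type arguments, the image is in fact closed, since vertex stabilisers of $T$ contain finite-index subgroups commensurable with vertex stabilisers of $X$.

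Next I transfer boundary transitivity from $X$ to $T$. There is a natural $G$-equivariant map from the ends of $X$ that are separated by cuts in $\mathcal{E}$ onto $\partial T$. Because $X$ has infinitely many ends, this map is surjective onto an infinite subset and identifies the thick ends. Since $G$ is transitive on all ends of $X$, no end is thin relative to $\mathcal{E}$, so the map is a $G$-equivariant continuous surjection $\partial X \to \partial T$. Consequently $H$ acts transitively on $\partial T$. A closed subgroup of $\Aut(T)$ acting transitively on $\partial T$ (with $\lvert\partial T\rvert > 2$) is non-compact and acts with at most two orbits on vertices, and without inversion up to passing to the barycentric subdivision. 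The reason is that transitivity on the boundary gives transitivity on pairs consisting of a vertex and a boundary point at distance $n$. This standard fact (Burger--Mozes, Fig\`a-Talamanca--Nebbia) also forces $T$ to be semi-regular, say $T \cong T_{d_1,d_2}$. After removing vertices of degree $2$ by subdivision conventions, I may arrange $d_1 \ge 2$ and $d_2 \ge 3$; the tree is not a line because it has infinitely many ends.

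The main obstacle is producing the nested $G$-invariant family with the property that the induced map on ends is onto and equivariant. This is exactly the content of Dunwoody's cut theory as used in \cite{Nev91}. I would cite it directly rather than reprove it, checking only that the kernel of $G \to \Aut(T)$ is compact, so that $H \cong G/\ker$ is a genuine topological quotient. Compactness of the kernel follows because the kernel fixes every cut in $\mathcal{E}$. Since $G$ is closed in $\Aut(X)$ and the cuts are finite, the kernel then stabilises a finite set of vertices, so it is compact and the quotient map is open.
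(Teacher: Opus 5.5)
The paper gives no proof of this lemma; it only cites \cite{Nev91}. Your Dunwoody structure-tree strategy is a sensible route, but several load-bearing steps are false as stated, not merely unproved.

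First, local finiteness of $T$ does not follow from the cuts being finite and $X$ being locally finite. In the Cayley graph of $\mathbb{Z}^2*\mathbb{Z}/2$ the minimal end-separating cuts are the bridges, and each coset of $\mathbb{Z}^2$ is a vertex of infinite degree in the structure tree. Local finiteness has to come from end-transitivity and non-compactness. Suppose a vertex $v$ of $T$ carried infinitely many cuts $C_i$, oriented away from $v$. Each edge of $X$ lies in $\delta C$ for only finitely many $C\in\mathcal{E}$, and the end space is compact. So a limit of ends $\omega_i\in C_i$ is an end of $X$ lying on the $v$-side of every cut, i.e.\ one that sits at a vertex of $T$ rather than at an end of $T$. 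This contradicts transitivity only once you know that some end of $X$ is sent to an end of $T$. That requires $T$ to have a ray, which is not automatic for a tree not yet known to be locally finite.

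Everything downstream depends on local finiteness: compactness of the $G$-stabilisers of vertices of $T$, hence properness of $G\to\Aut(T)$ and closedness of the image, and finiteness of $d_1,d_2$. Your end-map paragraph (``surjective onto an infinite subset'', ``identifies the thick ends'', ``no end is thin relative to $\mathcal{E}$'') also does not establish the two facts actually needed. These are that every end of $T$ is the image of an end of $X$, and that, by transitivity, no end of $X$ is sent to a vertex of $T$.

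Second, the ``standard fact'' you invoke is misquoted. Boundary transitivity does not give non-compactness: $\Aut(T_3)_x$ is compact and transitive on $\partial T_3$. Non-compactness of $H$ must instead come from that of $G$, via the compact kernel and properness.

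More seriously, when $T$ has vertices of degree $2$, a closed non-compact boundary-transitive group need not have at most two vertex orbits, and $T$ need not be semi-regular. For example, $\Aut(T_3)$ acting on $T_3$ with every edge subdivided into four edges has three vertex orbits. This really happens for structure trees. Replace each edge of $T_3$ by a ladder of length $m$. The minimal end-separating cuts are the pairs of rail edges at a common level. They form a nested $\Aut(X)$-invariant family, and the resulting $T$ is $T_3$ with every edge subdivided into $m+1$ edges.

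So the steps must come in this order:
\begin{enumerate}[(a)]
   \item Show that $T$ has no leaves; a leaf would carry ends of $X$ sitting at a vertex.
   \item Contract maximal chains of degree-$2$ vertices to obtain a tree $T'$ with all degrees at least $3$, on which $H$ still acts, possibly with inversions.
   \item Only then run the Burger--Mozes argument: $H_x$ is transitive on $\partial T'$, hence on each sphere about $x$, so the neighbours of $x$ form one orbit. This gives at most two orbits and $T'\cong T_{d_1,d_2}$.
\end{enumerate}
Your remark about ``removing vertices of degree $2$'' comes after semi-regularity has already been asserted, and it is never carried out.
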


It is well known that the property of a group being Wiener is preserved under taking quotients \cite[Theorem 11.5.4]{Pal01}, thus, to complete the proof of Theorem B(i), it suffices to show that any group satisfying the properties of $H$ in the lemma is not Wiener. We now proceed with proving this fact. 

So for the remainder of this section we fix natural numbers $d_1 \ge 2$ and $d_2 \ge 3$, and fix a closed subgroup $G \le \Aut(T_{d_1,d_2})$ which is not compact, acts transitively on the boundary of $T_{d_1,d_2}$, and has at most two orbits on the vertices. Now let $v$ be a vertex of $T_{d_1,d_2}$ and $\infty$ an end of $T_{d_1,d_2}$. We set $K$ and $P$ to be the subgroups of automorphisms of $G$ that fix $v$ and $\infty$ respectively. It is well known that $(G,K)$ is a Gelfand pair \cite[Lemma 3.2.12]{Sem23-thesis} and that $P$ is a maximal cocompact amenable subgroup of $G$ such that $G=KP$.

Let $(v_n)_{n \in \mathbb{N}}$ be a ray in the tree $T_{d_1,d_2}$ which lies in the end $\infty$. For every $p \in P$, and $n$ sufficiently large, there exists a $k \in \mathbb{Z}$ such that $p(v_n) = p(v_{n+k})$. We then define a function $\nu: P \rightarrow \mathbb{Q}$ given by $\nu(p) := k/2$. This can be shown to be a homomorphism and its definition does not depend on the choice of the ray $(v_n)_{n \in \mathbb{N}}$ (see \cite[Chapitre I]{Cho94}). Then, following the work of \cite[Chapter 2]{Cho94}, one defines for every $\lambda \in \mathbb{C}$ a character $\chi^\lambda$ on $P$ given by
\begin{displaymath} \chi^\lambda(p) := (\sqrt{d_1d_2}\lambda)^{\nu(p)}. \end{displaymath}
We then consider, as done again in \cite[Chapter 2]{Cho94}, the smoothly induced representation $\pi^\lambda := \ind_P^G(\chi^\lambda)$ whose representation space is given by
\begin{displaymath} V^\lambda = \{ f \in C^\infty(G) : f(gp) = \chi^\lambda(p)f(g), g \in G, p \in P \}. \end{displaymath}
As discussed in the preliminaries, restricting functions in $V^\lambda$ to $K$ gives rise to an isomorphism $R_K: V^\lambda \rightarrow C^\infty(K/K\cap P)$, where we consider $C^\infty(K/K\cap P)$ as a subspace of $C^\infty(K)$. On $V^\lambda$, one then defines a Hermitian form by
\begin{displaymath} \langle f,g\rangle_K := \int_K f(k)\overline{g(k)}\dd k. \end{displaymath}
The completion of $V^\lambda$ with respect to this form is a Hilbert space denoted by $\H^\lambda$, and it is shown in \cite[Proposition 2.1.2]{Cho94} that $\pi^\lambda$ extends to a bounded representation on $\H^\lambda$, denoted by $\Pi^\lambda$. 

Now let $\varphi: K/K \cap P \rightarrow G/P$ denote the canonical homeomorphism. It is a consequence of the results in \cite[Section 2.2]{Cho94} that the map $R_K\circ \varphi^{-1}: V^\lambda \rightarrow C^\infty(G/P)$ extends to an isometric isomorphism $U: \H^\lambda \rightarrow L^2(G/P)$. Furthermore, it is a consequence of \cite[Proposition 2.2.1]{Cho94}, that if $z \in \mathbb{C}$ such that $\lambda = (d_1d_2)^{z-\frac12}$, then $U\Pi^\lambda(g) = \pi_z(g)U$ for all $g \in G$, where $(\pi_z,L^2(G/P))$ is the boundary representation as defined in the preliminaries of this article. In particular, $\Pi^\lambda$ and $\pi_z$ are equivalent representations for $\lambda = (d_1d_2)^{z-\frac12}$. 

Using \cite[Théorème 2.4.6]{Cho94}, one deduces that $\pi_z$ is topologically irreducible provided the following hold:
\begin{itemize}
   \item $z \ne 1 + \frac{2\pi i k}{\log(d_1d_2)}$ for $k \in \mathbb{Z}$;
   \item $z \ne \frac{2\pi i k}{\log(d_1d_2)}$ for $k \in \mathbb{Z}$;
   \item $z \ne \frac{\log(d_1)}{\log(d_1d_2)} + \frac{\pi i (2k+1)}{\log(d_1d_2)}$ for $k \in \mathbb{Z}$;
   \item $z \ne \frac{\log(d_2)}{\log(d_1d_2)} + \frac{\pi i (2k-1)}{\log(d_1d_2)}$ for $k \in \mathbb{Z}$.
\end{itemize}

Now, let $\mathds{1}_K$ denote the characteristic function on $K$ which is contained in the space $C^\infty(K/K\cap P) \cong V^\lambda \subset \H^\lambda$. Then, one checks by definition of $U$ that for all $g \in G$
\begin{align*}
\psi_\lambda(g) := \langle \Pi^\lambda(g) \mathds{1}_K, &\mathds{1}_K\rangle = \langle U^{-1}\pi_{z}(g)U\mathds{1}_K, \mathds{1}_K \rangle \\
&= \langle \pi_{z}(g)U\mathds{1}_K, U\mathds{1}_K \rangle = \langle \pi_{z}(g)\mathds{1}_{G/P}, \mathds{1}_{G/P} \rangle = \varphi_z(g)
\end{align*}
where we again have that $\lambda = (d_1d_2)^{z-\frac12}$.

By \cite[Proposition 3.2.2(1)]{Cho94}, if $\lambda_1, \lambda_2 \in \mathbb{C}$, $\psi_{\lambda_1} = \psi_{\lambda_2}$ if and only if $\lambda_1 = \lambda_2$ or $\lambda_1 = \lambda_2^{-1}$. This implies that, for $z_1,z_2 \in \mathbb{C}$, $\varphi_{z_1} = \varphi_{z_2}$ if and only if $z_1 = z_2 + \frac{2\pi i k}{\log(d_1d_2)}$ or $z_1 = z_2 +1 + \frac{2\pi i k}{\log(d_1d_2)}$ for some $k \in \mathbb{Z}$. Then, one checks that for $z \in \mathbb{C}$, $\varphi^*_z(g) = \overline{\varphi_z(g^{-1})} = \varphi_{-\overline{z}}(g)$ for all $g \in G$. It is then easy to find $z \in \mathbb{C}$ with $0 < \Re(z) < 1/2$, $\Re(z) \ne \frac{\log(d_1)}{\log(d_1d_2)}$, $\Re(z) \ne \frac{\log(d_2)}{\log(d_1d_2)}$, such that $\varphi_z \ne \varphi_{-\overline{z}}$. This then implies by Proposition \ref{prop:posdef}, for this choice of $z$, that $\varphi_z$ is not positive definite. Furthermore, since $0 < \Re(z) < 1/2$, $\Re(z) \ne \frac{\log(d_1)}{\log(d_1d_2)}$ and $\Re(z) \ne \frac{\log(d_2)}{\log(d_1d_2)}$, this ensures that $\pi_z$ is topologically irreducible. Thus, by Theorem A, $G$ is not Wiener. 

\subsection{Proof of Theorem B(ii)}

Throughout this section we assume some rudimentary knowledge of the theory of reductive groups and their representation theory. We use \cite{Car79} as the main reference. To start, we set the following notation which will be used throughout the proof:
\begin{itemize}
   \item $G$ will denote a split reductive algebraic group over a non-archimedean local field $k$ with residue degree $q$;
   \item $A$ is a maximal split torus in $G$, $M$ the centraliser of $A$ in $G$, $N(A)$ the normaliser of $A$ in $G$, and $W := N(A)/M$ the associated Weyl group;
   \item $\mathcal{B}$ is the Bruhat-Tits building associated to $G$, $\mathcal{A}$ the fundamental apartment in $\mathcal{B}$ associated to $A$, $x_0$ a special vertex in $\mathcal{A}$, and $K$ the maximal compact open subgroup of elements in $G$ which fix $x_0$;
   \item $P$ is a minimal parabolic subgroup in $G$ such that $G=KP$ and $N$ the unipotent radical of $P$ so that $P=MN$.
\end{itemize}

To start the proof, first note that it is well known that $(G,K)$ is a Gelfand pair. See for example \cite[Corollary 4.1]{Car79}.  

Now the group $P$ is non-unimodular and we let $\Delta_P$ be the modular function on $P$. To show that $G$ is not a Wiener group, it suffices to show, by combining Theorem A, Proposition \ref{prop:completionrep} and Proposition \ref{prop:unitarizable}, that the smooth induced representation $\sigma_z := \ind_P^G(\Delta_P^{-z})$ is algebraically irreducible and not unitarizable for some $z \in \mathbb{C}$ with $0 < \Re(z) < 1/2$. 

Let $w_\ell$ be the longest word of the Weyl group $W$ and consider the function
\begin{equation}\label{eqn:c-func} \mathbb{C} \rightarrow \mathbb{C}, z \mapsto c_{w_\ell}(w_\ell\cdot\Delta_P^z) c_{w_\ell}(\Delta_P^z) \end{equation}
where the function $c_{w_\ell}$ is as defined in \cite[$\S$3]{Cas80}. One checks by definition of the function $c_{w_\ell}$ that the map $z \mapsto c_{w_\ell}(w_\ell\cdot\Delta_P^z) c_{w_\ell}(\Delta_P^z)$ is meromorphic and hence can have at most countably many zeroes. It then follows from \cite[Proposition 3.5(b)]{Cas80} that the representation $\sigma_z$ is algebraically irreducible everywhere except at countably many points. 

Thus, to find a $z \in \mathbb{C}$ with $0 < \Re(z) < 1/2$ such that $\sigma_z$ is algebraically irreducible and not unitarizable, it suffices to show that there are uncountably many $z \in \mathbb{C}$ with $0 < \Re(z) < 1/2$ such that $\sigma_z$ is not unitarizable. To do this, it further suffices to show that there are uncountably many $z \in \mathbb{C}$ with $0 < \Re(z) < 1/2$ such that $\sigma_z$ is not Hermitian i.e.\ $\sigma_z^+ \ncong \sigma_z$.

Note that $\sigma_{-z}^+ = \ind_P^G(\Delta_P^z)^+ = \ind_P^G(\Delta_P^{-\overline{z}}) = \sigma_{\overline{z}}$. So we need to show that there are uncountably many $z \in \mathbb{C}$ with $0 < \Re(z) < 1/2$ such that $\ind_P^G(\Delta_P^z) \ncong \ind_P^G(\Delta_P^{-\overline{z}})$. Since the representation $\sigma_z$ is spherical whenever it is irreducible, it follows from Theorem 4.2(b) and Theorem 4.3 in \cite{Car79} that the representations $\ind_P^G(\Delta_P^z)$ and $\ind_P^G(\Delta_P^{-\overline{z}})$ are equivalent if and only if there exists $w \in W$ such that $w \cdot \Delta_P^{z} = \Delta_P^{-\overline{z}}$ (provided that $\sigma_z = \ind_P^G(\Delta_P^z)$ is irreducible).

We claim that, if there exists $w \in W$ such that $w \cdot \Delta_P^{z} = \Delta_P^{-\overline{z}}$, then $z$ must be either purely imaginary or purely real. This will then imply Theorem B(ii), since it implies that there are uncountably many $z \in \mathbb{C}$ with $0 < \Re(z) < 1/2$ such that $\sigma_z$ is not unitarizable and is algebraically irreducible. So to prove the claim, it is well known that $P=MN$, and since $G$ is split, the modular function on $P$ is given by the expression 
\begin{displaymath} \Delta_P(mn) = q^{2\rho(m\cdot x_0)}  \end{displaymath}
where $\rho$ is the half sum of the positive roots associated to $G$ and $q$ is the residue characteristic of $k$ (see Section 3.5 and Equation $24_c$ in \cite{Car79} for further information). Then, if $w \cdot \Delta_P^{z} = \Delta_P^{-\overline{z}}$, this implies that $2z (w\cdot \rho) = -2\overline{z}\rho$, which in turn implies that $w\cdot \rho = \frac{-\overline{z}}{z}\rho$. So $w\cdot \rho$ must be a complex multiple of $\rho$, and in particular, this implies that either $w\cdot\rho = \rho$ or $w\cdot \rho = -\rho$. Thus, either we have $\frac{-\overline{z}}{z}=1$ or we have $\frac{-\overline{z}}{z}=-1$, and in particular, $z$ is either purely imaginary or purely real. This completes the proof.


\section{Acknowledgements}

We would like to thank Anne-Marie Aubert for her help sourcing references regarding the representation theory of the reductive groups in the proof of Theorem B. We also thank Pierre-Emmanuel Caprace, Dougal Davis, Simon Marshall and Maarten Solleveld for useful discussions during the course of this work. 

\bibliographystyle{amsplain}
\bibliography{non-wiener}


\end{document}